\documentclass[a4paper,fleqn]{cas-sc}
\usepackage{graphicx}
\usepackage{lineno,hyperref}
\usepackage{tabularx} 
\usepackage{booktabs}
\usepackage{amsmath,amssymb,amsfonts}
\usepackage{stackengine}
\usepackage{amssymb}
\usepackage{eqnarray} 
\usepackage{mathalfa} 
\usepackage[dvips]{epsfig}   
\usepackage[ruled,vlined,linesnumbered]{algorithm2e}
\usepackage{psfrag}
\usepackage{amsmath}
\usepackage{stfloats}
\usepackage{amssymb}
\usepackage{dsfont}
\usepackage{xcolor}
\usepackage{bbm}
\usepackage{float}
\usepackage[prependcaption,colorinlistoftodos]{todonotes}
\usepackage{hyperref}
\usepackage{mathrsfs}
\usepackage{lscape}
\usepackage{longtable}
\usepackage{rotating}
\usepackage{multirow}
\usepackage{amsthm}
\usepackage{color}
\usepackage{xcolor}
\usepackage{url}
\usepackage{subfigure}
\usepackage{hyperref}
\usepackage{tikz}
\usetikzlibrary{shapes.geometric, arrows}
\hypersetup{
    colorlinks=true, 
    linktoc=all,     
    linkcolor=blue,  
}

\usepackage{fancyhdr}
\usepackage{chngcntr}

\usepackage{totcount}
\usepackage{atbegshi}

\usepackage[labelfont=bf]{caption} 
\usepackage{caption} 

\usepackage{enumerate}
\usepackage{comment}
\usepackage[utf8]{inputenc}

\usepackage[prependcaption,colorinlistoftodos]{todonotes}

\newtheorem{Problem}{Problem}

\newcommand{\R}{\mathbb{R}}
\newcommand{\E}{\mathbb{E}}

\newcommand{\bv}[1]{\mathbf{#1}}

\newcommand{\sD}{\mathcal{D}}

\newcommand{\sX}{\mathcal{X}}
\newcommand{\sU}{\mathcal{U}}

\newcommand{\DKL}[2]{{D}_{\text{KL}}\left(#1\mid \mid #2 \right)}
\newcommand{\Dr}[2]{{D}_{\text{r}}\left(#1\mid \mid #2 \right)}
\newcommand{\nonnegative}[1]{\left({#1}\right)_{+}}

\newcommand{\plant}[2]{p_{{#1}} \left(\bv{x}_{{#1}}\mid \bv{x}_{{#2}}, \bv{u}_{{#1}} \right)}
\newcommand{\shortplant}[1]{p_{{#1}}^{(x)}}
\newcommand{\policy}[2]{p_{{#1}}\left(\bv{u}_{{#1}}\mid \bv{x}_{{#2}} \right)}
\newcommand{\shortpolicy}[2]{p_{{#1}\mid{#2}}^{(u)}}

\newcommand{\optimalpolicy}[2]{p^{\star}_{{#1}} \left(\bv{u}_{{#1}}\mid \bv{x}_{{#2}} \right)}

\newcommand{\optimalpolicygen}[3]{p^{\star,{#3}}_{{#1}} \left(\bv{u}_{{#1}}\mid \bv{x}_{{#2}} \right)}

\newcommand{\refplant}[2]{q_{{#1}} \left(\bv{x}_{{#1}}\mid \bv{x}_{{#2}}, \bv{u}_{{#1}} \right)}

\newcommand{\refpolicy}[2]{q_{{#1}}\left(\bv{u}_{{#1}}\mid \bv{x}_{{#2}} \right)}

\newcommand{\shortplantstat}[2]{p^{(x)}}

\newcommand{\shortpolicystat}[2]{p^{(u)}}

\newcommand{\shortoptimalpolicystat}[2]{p^{(u),\star}}

\newcommand{\shortrefplantstat}[2]{q^{(x)}}

\newcommand{\shortrefpolicystat}[2]{q^{(u)}}

\newcommand{\jointxu}[2]{p_{{#1}}\left(\bv{x}_{{#1}},\bv{u}_{{#1}}\mid \bv{x}_{{#2}} \right)}

\newcommand{\refjointxu}[2]{q_{{#1}}\left(\bv{x}_{{#1}},\bv{u}_{{#1}}\mid \bv{x}_{{#2}} \right)}

\newcommand{\shortjointxustat}[2]{p}

\newcommand{\shortrefjointxustat}[2]{q}

\newcommand{\TsD}{TsD}

\usepackage[numbers]{natbib}

\newtheorem{theorem}{Theorem}[section]

\newtheorem{lemma}[theorem]{Lemma}
\newtheorem{definition}[theorem]{Definition}

\newtheorem{remark}[theorem]{Remark}

\begin{document}
\let\WriteBookmarks\relax
\def\floatpagepagefraction{1}
\def\textpagefraction{.001}

\shorttitle{Tsallis Probabilistic Design}

\shortauthors{Kungurtsev, Russo}

\title [mode = title]{Towards Tsallis Fully Probabilistic Design}                      



%
\author[1]{Vyacheslav Kungurtsev}[orcid = 0000-0003-2229-8824]

\cormark[1]


\ead{kunguvya@fel.cvut.cz}



\affiliation[1]{organization={Department of Computer Science, Czech Technical University},
    city={Prague},
    country={Czech Republic}}

\author[2]{Giovanni Russo}[orcid = 0000-0001-5001-3027]

\ead{giovarusso@unisa.it}


\affiliation[2]{organization={Department of Computer and Electrical Engineering \& Applied Mathematics, University of Salerno},
    city={Salerno},
    country={Italy}}

\cortext[cor1]{Corresponding author}



\begin{abstract}
Fully Probabilistic design (FPD) is a powerful framework offering an elegant and unifying account of stochastic control, learning and decision-making. Here we introduce a generalized FPD framework, which we term as Tsallis FPD. Tsallis FPD uses Tsallis divergence in place of the Kullback-Leibler divergence that defines the standard FPD cost term. Tsallis divergence is a natural generalization of the KL divergence, rooted in non-extensive statistical mechanics and providing flexibility towards modeling stochastic processes with non-Gaussian tail behavior. After formulating Tsallis FPD, we present a double iteration scheme that performs a sequence of backwards inductions, rather than a single pass down the stages that constitutes the proven approach for classical FPD. 
\end{abstract}



\begin{keywords}
Probabilistic Design \sep Tsallis Divergence \sep Uncertainty \sep Decision-Making
\end{keywords}

\maketitle

\section{Introduction}

Fully probabilistic design (FPD) offers an elegant and unifying account of stochastic control, learning and decision making. Introduced in~\cite{karny1996towards}, FPD defines a sequential Bayesian decision-making problem that involves finding optimal policies by minimizing an information-theoretic objective: the Kullback-Leibler (KL) divergence~\cite{SK_RL:51} between two probability densities. These densities capture the (agent-environment) closed-loop behavior minimal in divergence to some ideal reference behavior. 

Since its introduction, FPD has seen substantial development over the years; see, e.g.,~\cite{MK-TG:06,karny2012axiomatisation,RH-15,AQ-MK-TG:16,MK:20,gagliardi2022probabilistic,MK-TS:23,MK-SM:25}. Beyond this rich body of work, FPD is closely related to a broad range of decision-making frameworks, such as knowledge transfer~\cite{foley2017fully}, information fusion~\cite{azizi2016hierarchical}, KL control~\cite{ET:09,PG_MR_RW:14}, control-as-inference~\cite{HK-VC-MO:12,MahmoudiFilabadi2026} and maximum entropy reinforcement learning~\cite{eysenbach2022maximum}; see also~\cite{EG-GR:22} for a survey and the closely related~\cite{bhole2025unifyingentropyregularizationoptimal}. Moreover, the information theoretical cost functional in FPD is also closely related to policy computation under the free energy principle in neuroscience, see, e.g.,~\cite{KF:09,AS-HJ-KF-GR:25}. The choice of the KL divergence in FPD yields several remarkable properties that enable deriving closed-form solutions for the optimal policy. The KL is however a particular instance within a broader class of information measures. The Tsallis divergence constitutes a natural generalization of the KL divergence, rooted in non-extensive statistical mechanics, with the associated Tsallis entropy extending Shannon entropy through a non-additive structure~\cite{CT:88,CT-GMM-YS:05}. As noted in~\cite{lee2019tsallis}, by tuning an entropic index, Tsallis divergence can generate various types of divergences and recovers the KL divergence in a special case. This flexibility can be useful to, e.g., effectively model long range and fat tail dependency~\cite{lee2019tsallis}.  In fact, as also noted in~\cite{wang2021variational}, an increasing body of theoretical and experimental literature has shown that a wide range of complex natural, artificial, and social systems can be well described by its flexible stochastic structure. Consequently, Tsallis divergence is increasingly leveraged to regularize reward/cost functions in learning, optimal transport and control~\cite{lee2019tsallis,LZ-ZC-MS-MW:23,nielsen2011r,wang2021variational,10.5555/3298483.3298583} to, e.g., improve exploration and a flexible general framework for policies associated with different dynamic structure.

Motivated by these observations, we introduce a generalization of FPD, Tsallis FPD (T-FPD), where Tsallis divergence is used in place of the KL divergence in the FPD formulation. In FPD, computing the optimal policy involves a backward recursion that exploits additivity of the KL divergence. This approach cannot be used in T-FPD since Tsallis divergence is not additive. To address this challenge, we  formulate a double iteration scheme. 



\noindent{\bf Contributions.} To the best of our knowledge, there is no work in the literature extending FPD to Tsallis divergence. This work fills this gap. Our key technical contributions can be summarized as follows. We introduce Tsallis FPD formally as a probabilistic decision problem. However, finding the optimal solution requires developing a distinct mathematical approach. Deriving a constructive method to find the optimal solution, is a key contribution of our work. Specifically, we obtain a solution of the T-FPD by deriving the backwards induction to a multistage FPD problem by a Jacobi iteration type decomposition of fixed and free quantities for the solution density in the optimization expression. This defines a double loop fixed point-type iteration, whose fixed points correspond to solutions to the original problem. 

The rest of the paper is organized as follows. After introducing the mathematical background in Section~\ref{sec:prelim}, we formulate Tsallis Fully Probabilistic Design and the state our main results  (Section~\ref{sec:mainresults}). The derivations of the expressions for the induction are given in Section~\ref{sec:proofs}. Concluding remarks are given in Section~\ref{sec:conc}. 

\section{Mathematical Preliminaries}\label{sec:prelim}
Sets are in {\em calligraphic} and vectors in {\bf bold}.   A random variable is denoted by $\bv{V}$ and its realization is $\bv{v}$. We denote the \textit{probability mass function} (pmf, for discrete variables) or \textit{probability density function} (pdf, for continuous variables) of $\bv{V}$  by $p(\bv{v})$ and we let $\sD$ be the convex subset of pdfs/pmfs, that is for an implied domain $\R^d$, $\sD:=\{p\in L^1(\R^d),\,\|p\|_{L^1(\R^d)}=1,\, p(\bv{v})\ge 0 \,\forall \bv{v}\in\R^d\}$. Whenever we take  integrals we always assume that they exist.  The  expectation of a function $\bv{h}(\cdot)$ of $\bv{V}$ is denoted $\E_{{p}}[\mathbf{h}(\bv{V})]:=\int_{\bv{v}}\bv{h}(\bv{v})p(\mathbf{v})$, where the integral is over the support of $p(\bv{v})$; whenever it is clear from the context, we  omit the subscript in the integral.
The {joint} pdf of $\bv{V}_1$ and $\bv{V}_2$ is denoted by  $p(\bv{v}_1,\bv{v}_2)$ and the {conditional} pmf/pdf of $\bv{V}_1$ with respect to (w.r.t.) $\bv{V}_2$ is $p\left( \bv{v}_1\mid   \bv{v}_2 \right)$. Countable sets are denoted by $\lbrace w_k \rbrace_{k_1:k_n}$, where $w_k$ is the generic set element, $k_1$ ($k_n$) is the index of the first (last) element and  $k_1:k_n$ is the set of consecutive integers between (including) $k_1$ and $k_n$.
Also,  functionals are denoted by capital calligraphic characters with arguments within curly brackets. Standard finite indexing notation will be used in the form $[N]:=\{1,2,3,\cdots,N\}$.  


\subsection{Tsallis Divergence: definition and basic properties}


To formulate T-FPD, we make use of Tsallis Divergence (\TsD). We use the definition from \cite{wang2021variational}, which is stated for continuous random variables. We first define a parametric deformation of the logarithm and exponential:

\begin{definition}
The deformed logarithm $\log_r:\R^+ \to \R$ and deformed exponential $\exp_r:\R\to\R$ are defined as:
\begin{align}
\log_r(v) & := \frac{v^{r-1}-1}{r-1}\\
\exp_r(v) & := \nonnegative{1+(r-1)v}^{\frac{1}{r-1}},
\end{align}
with $r>0$ and $\nonnegative{x}:=\max\left\{0,x\right\}$.
\end{definition}
With this definition, we can then give the following:
\begin{definition}[Tsallis Divergence and Total Conditional Tsallis Divergence]
Let $p(\bv{v})$, $q(\bv{v})$ be pdfs. The Tsallis Divergence (\TsD) of $p(\bv{v})$ w.r.t. $q(\bv{v})$ is 
$$
\Dr{p(\bv{v})}{q(\bv{v})} := \E_p\left[\log_r\frac{p(\bv{v})}{q(\bv{v})}\right].
$$
The {Total Conditional Tsallis Divergence} of $p(\bv{v} \vert \bv{z})$, with respect to $q(\bv{v})$ is
$$
\E_{p(\bv{z})}\left[\Dr{p(\bv{v}\vert \bv{z})}{q(\bv{v})}\right] := \E_{p(\bv{z})}\left[\E_{p(\bv{v}\vert\bv{z})}\left[\log_r\frac{p(\bv{v}\vert \bv{z})}{q(\bv{v})}\right]\right].
$$
\end{definition}
\begin{remark}
In the above definition, $r$ is often termed as entropic index. As noted in \cite{tsallis1988possible}, as $r\to 1$, TsD becomes the Kullback-Leibler (KL) Divergence. As for the Kullback-Leibler Divergence, TsD is finite only if $p(\bv{v})$ is absolutely continuous w.r.t. $q(\bv{v})$.
\end{remark}
An important distinction between KL divergence and Tsallis divergence is that \TsD~is sub-additive. This crucial difference is formalized with the following result adapted from~\cite{furuichi2012some}:
\begin{lemma}\label{lem:non-additivity}
Let $p(\bv{v},\bv{z})$, $q(\bv{v},\bv{z})$ be two joint pdfs such that $q(\bv{v},\bv{z})=q_v(\bv{v})q_z(\bv{z})$ and $p(\bv{v},\bv{z})=p_{\vert}(\bv{v}\vert\bv{z})p_z(\bv{z})$. Then:
    \begin{equation}\label{eq:non-additive}
\begin{array}{l}
\Dr{p(\bv{v},\bv{z})}{q(\bv{v},\bv{z})}  = \E_{p(\bv{z})}\left[\Dr{p(\bv{v}\vert \bv{z})}{q(\bv{v})}\right] +\Dr{p(\bv{z})}{q(\bv{z})} +(1-r)\E_{p(\bv{z})}\left[\Dr{p(\bv{v}\vert \bv{z})}{q(\bv{v})}\right]\Dr{p(\bv{z})}{q(\bv{z})}.
        \end{array}
\end{equation}
\end{lemma}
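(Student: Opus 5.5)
The plan is a pointwise computation: turn the joint density ratio into a product of a conditional ratio and a marginal ratio, use the pseudo-additivity of the deformed logarithm, and integrate term by term. Two cautions come first. I expect the coefficient of the cross term to come out as $(r-1)$ under the paper's $\log_r$, not the stated $(1-r)$. I also expect the stated product of expectations to follow only after an additional decoupling step that does not hold for general $p_z$ and $q_z$.

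\emph{Step 1 (pseudo-additivity).} From the definition of $\log_r$, for $a,b>0$ one has $a^{r-1}b^{r-1}-1=(r-1)\log_r a+(r-1)\log_r b+(r-1)^2\log_r a\,\log_r b$. Hence
\[
\log_r(ab)=\log_r a+\log_r b+(r-1)\log_r a\,\log_r b .
\]
Under Tsallis' original convention $\ln_{r'}(x)=(x^{1-r'}-1)/(1-r')$, the cross coefficient is $(1-r')$. I would first check whether the stated $(1-r)$ corresponds to the reparametrisation $r'=2-r$ used in \cite{furuichi2012some}, and carry the sign through accordingly.

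\emph{Step 2 (factor the ratio).} By the assumed structure of $p$ and $q$,
\[
\frac{p(\bv{v},\bv{z})}{q(\bv{v},\bv{z})}=a\,b,\qquad a=\frac{p(\bv{v}\vert\bv{z})}{q(\bv{v})},\quad b=\frac{p(\bv{z})}{q(\bv{z})} .
\]
I apply Step 1 pointwise and take $\E_{p(\bv{v},\bv{z})}=\E_{p(\bv{z})}\E_{p(\bv{v}\vert\bv{z})}$.
\begin{itemize}
\item The $\log_r a$ term gives $\E_{p(\bv{z})}[\Dr{p(\bv{v}\vert\bv{z})}{q(\bv{v})}]$ by the definition of the total conditional divergence.
\item The $\log_r b$ term does not depend on $\bv{v}$, so the inner integral is trivial and it gives $\Dr{p(\bv{z})}{q(\bv{z})}$.
\item For the cross term, integrating over $\bv{v}$ first pulls out $\log_r b$. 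This leaves the cross coefficient times $\E_{p(\bv{z})}\big[f(\bv{z})\,g(\bv{z})\big]$, where $f(\bv{z})=\Dr{p(\bv{v}\vert\bv{z})}{q(\bv{v})}$ and $g(\bv{z})=\log_r\frac{p(\bv{z})}{q(\bv{z})}$.
\end{itemize}

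\emph{Step 3 (main obstacle: factor the cross term).} The statement requires $\E_{p(\bv{z})}[f\,g]=\E_{p(\bv{z})}[f]\,\E_{p(\bv{z})}[g]$, i.e.\ $\mathrm{Cov}_{p(\bv{z})}(f,g)=0$, since $\E_{p(\bv{z})}[g]=\Dr{p(\bv{z})}{q(\bv{z})}$. I would write the exact identity with the covariance displayed and then try to show the covariance vanishes under the lemma's hypotheses. That fails in general, because $f$ and $g$ are arbitrary functions of $\bv{z}$.

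I would therefore look for a reading that makes the statement exact, and check whether \cite{furuichi2012some} intends one of these:
\begin{itemize}
\item one of $f$ or $g$ is $p(\bv{z})$-a.s.\ constant, e.g.\ $p(\bv{z})=q(\bv{z})$ on the support, where both sides of the cross term are zero;
\item the product in the statement is shorthand for $\E_{p(\bv{z})}[f\,g]$.
\end{itemize}
If neither holds, the honest conclusion is that the exact identity carries the covariance term, and the displayed formula holds precisely when that covariance vanishes.
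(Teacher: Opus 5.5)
The paper gives no proof of this lemma. It only attributes it to \cite{furuichi2012some}, so there is no argument to compare yours against. Your pointwise computation is correct, and your suspicion is justified: the displayed identity is false as written, so no proof of it can exist. Write $f(\bv{z})=\Dr{p(\bv{v}\vert\bv{z})}{q(\bv{v})}$ and $g(\bv{z})=\log_r\frac{p(\bv{z})}{q(\bv{z})}$. With the paper's $\log_r$, i.e.\ $\Dr{p}{q}=\frac{1}{r-1}\left(\int p^r q^{1-r}-1\right)$, the exact identity is
\[
\Dr{p(\bv{v},\bv{z})}{q(\bv{v},\bv{z})}=\E_{p(\bv{z})}[f]+\Dr{p(\bv{z})}{q(\bv{z})}+(r-1)\,\E_{p(\bv{z})}[fg].
\]
Equivalently, integrating $p(\bv{v}\vert\bv{z})^r q(\bv{v})^{1-r}$ over $\bv{v}$ first,
\[
\Dr{p(\bv{v},\bv{z})}{q(\bv{v},\bv{z})}=\Dr{p(\bv{z})}{q(\bv{z})}+\int p(\bv{z})^r q(\bv{z})^{1-r} f(\bv{z})\,d\bv{z}.
\]
The second form is a chain rule with weights $p(\bv{z})^r q(\bv{z})^{1-r}$ in place of $p(\bv{z})$. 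It is the cleanest corrected statement to record.

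There are two things to tighten.

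\emph{The sign.} The reparametrisation check is moot. The paper fixes $\log_r$ explicitly, so the coefficient is forced to be $(r-1)$. The stated $(1-r)$ is the pseudo-additivity coefficient for $\E_p[\ln_r(p/q)]$ with Tsallis' $\ln_r(x)=(x^{1-r}-1)/(1-r)$, which is the paper's $D_{2-r}$, not its $D_r$. The sign error alone already breaks the lemma in the product case, where your covariance vanishes. Take $r=2$, $q$ uniform on $\{1,2\}^2$ and $p_v=p_z=(3/4,1/4)$: the left side is $9/16$, both marginal divergences are $1/4$, and the stated right side is $7/16$. So your first bullet ($f$ or $g$ a.s.\ constant) makes the formula exact only after the sign is corrected. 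With $(1-r)$ kept, you need $\E_{p(\bv{z})}[f]\,\Dr{p(\bv{z})}{q(\bv{z})}=0$, as in your $p(\bv{z})=q(\bv{z})$ case.

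\emph{The covariance.} Replace ``fails in general'' with an explicit instance. Take $r=2$, $q$ uniform, $p_z=(3/4,1/4)$, $p(\cdot\vert 1)$ uniform and $p(\cdot\vert 2)=\delta_1$. Then $f=(0,1)$ and $g=(1/2,-1/2)$, and the left side is $3/8=\tfrac14+\tfrac14-\tfrac18$. The product formula gives $9/16$ with $(r-1)$ and $7/16$ with $(1-r)$.

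The paper applies this lemma repeatedly in the proofs of Theorems~\ref{th:solnonefpN} and~\ref{th:solnonefp}, so the correction carries over to those expansions.
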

\begin{remark}
From Lemma~\ref{lem:non-additivity}, as $r\to 1$, the last expression in~\eqref{eq:non-additive} vanishes. This means that, only in this special case, Lemma~\ref{lem:non-additivity} yields the classic chain rule for the KL Divergence. In the general case, a classical requirement for FPD functionals~\cite[Theorem 4.1 Part 1]{karny2012axiomatisation} is violated, necessitating the novel construction to which this paper is devoted to. 
\end{remark}

Our main results will leverage the following lemma from~\cite{wang2021variational}.

\begin{lemma}\label{lem:densitysolution0}
Let $\bv{U}$ be a random variable with outcomes in a subset $\sU$ of $\R^{m}$. Let $p$  be the decision density function for $\bv{u}$ and $q$ be some given reference density. Let $J:\R^m\to\R$ be a proper (finite valued) convex lower semi-continuous function on the $m$-dimensional vector space of reals, constituting a cost function, and $\rho>0$ be a scaling parameter. Consider the following  optimization problem with respect to the density $p(\bv{u})$
    \begin{equation}\label{eq:gentsopt0}
\min_{p(\bv{u})} \frac{1}{\rho}\mathbb{E}_{p(\bv{U})} [J(\bv{U})] +\Dr{p(\bv{u})}{q(\bv{u})}.
    \end{equation}
The unique optimal solution to the problem, which always exists, is
    \begin{equation}\label{eq:gentssol0}
        p^\star(U):= \frac{\exp_r\left(-\bar{\rho}^{-1} J(\bv{U})\right)q(\bv{U})}{\int \exp_r\left(-\bar{\rho}^{-1} J\right)q(\bv{\bv{u}}) d\bv{u}},\,\,\text{with }\bar{\rho} = \lambda_0(r-1)\rho
    \end{equation}
    Here $\lambda_0$ is the Lagrange multiplier for the integrability constraint.
\end{lemma}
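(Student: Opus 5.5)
The plan is a Lagrangian/variational argument over the convex set $\sD$, followed by a convexity argument for uniqueness. Write the objective as
\[
\mathcal{F}\{p\}=\int p(\bv{u})\Big(\tfrac{1}{\rho}J(\bv{u})+\log_r\tfrac{p(\bv{u})}{q(\bv{u})}\Big)d\bv{u}
=\tfrac{1}{\rho}\int pJ+\tfrac{1}{r-1}\Big(\int p^{r}q^{1-r}-1\Big),
\]
using $\int p=1$. The pointwise integrand $p\mapsto p^{r}q^{1-r}/(r-1)$ is strictly convex on $p\ge 0$ for every $r>0$, $r\neq1$: for $r>1$ because $p^r$ is convex and $r-1>0$, and for $r<1$ because $p^r$ is concave and $r-1<0$. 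The linear term $\int pJ$ does not affect convexity, so $\mathcal{F}$ is strictly convex on $\sD$. Hence any stationary point satisfying the KKT conditions is the unique minimizer. (The case $r=1$ is the KL case and reduces to the classical Gibbs solution.)

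Next I form the Lagrangian with a multiplier $\lambda_0$ for $\int p=1$ and multipliers $\mu(\bv{u})\ge0$ for $p\ge0$. Taking the pointwise first variation gives
\[
\tfrac{1}{\rho}J(\bv{u})+\tfrac{r}{r-1}\Big(\tfrac{p(\bv{u})}{q(\bv{u})}\Big)^{r-1}-\lambda_0-\mu(\bv{u})=0 .
\]
On the support of $p$ we have $\mu=0$, so I solve for $(p/q)^{r-1}$, which is an affine function of $-J$. The remaining work is algebraic: rescale this affine expression into the form $1+(r-1)(-\bar\rho^{-1}J)$, absorbing the constants into $\lambda_0$, $r$ and $\rho$, so that $\bar\rho=\lambda_0(r-1)\rho$ as in the statement. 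Complementary slackness then supplies the positive part $\nonnegative{\cdot}$ in $\exp_r$: where the affine expression would be negative, the constraint is active and $p=0$. This gives $p^\star\propto\exp_r(-\bar\rho^{-1}J)\,q$, and the normalizing denominator is forced by $\int p=1$, which is how $\lambda_0$ is pinned down implicitly.

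For existence I would take a minimizing sequence. Then I would use lower semicontinuity of $J$, weak-$L^1$ compactness from the $L^r(q^{1-r})$ control supplied by the Tsallis term, and weak lower semicontinuity of the convex integral functional to obtain a minimizer. Alternatively, I could show directly that the scalar equation for $\lambda_0$ (normalization) has a solution, since its left-hand side is monotone and continuous in $\lambda_0$, and then invoke the sufficiency of KKT under convexity. Convexity of $J$ is not essential beyond ensuring properness and measurability.

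I expect the main obstacle to be the normalization step: showing that some $\lambda_0$ exists making $\int\exp_r(-\bar\rho^{-1}J)q=$ the required constant while remaining consistent with $\bar\rho=\lambda_0(r-1)\rho$, since $\lambda_0$ appears both inside $\exp_r$ and in the normalizer. I would handle this by monotonicity and an intermediate value argument in $\lambda_0$. This step also needs integrability assumptions on $J$ under $q$, which the paper's standing assumption that all integrals exist provides.
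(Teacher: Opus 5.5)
The paper gives no proof of this lemma; it is quoted from \cite{wang2021variational}, so your argument has to stand on its own. For $r>1$ it does. The ingredients are the strict convexity of $p\mapsto p^rq^{1-r}/(r-1)$, the pointwise stationarity condition, and the factorization $\frac{r-1}{r}\bigl(\lambda_0-J/\rho\bigr)=\frac{(r-1)\lambda_0}{r}\bigl(1-\frac{J}{\lambda_0\rho}\bigr)$, which reproduces exactly $\bar\rho=\lambda_0(r-1)\rho$ once the prefactor is absorbed into the normalizer. Uniqueness via strict convexity holds for every $r\neq 1$. The normalization function $g(\lambda_0)=\int q\,\bigl[\frac{r-1}{r}(\lambda_0-J/\rho)\bigr]_+^{1/(r-1)}d\bv{u}$ is continuous and nondecreasing, tends to $0$ as $\lambda_0\downarrow\operatorname{ess\,inf}J/\rho$, and tends to $\infty$ as $\lambda_0\to\infty$. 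So your intermediate-value step closes, and pointwise minimization of the Lagrangian integrand gives global optimality without any compactness argument. Two small repairs are still needed. First, the factorization requires $(r-1)\lambda_0>0$, i.e.\ $\bar\rho>0$; secure this by shifting $J$ by a constant, which is harmless since $\int p=1$. Second, finiteness of $g$ and of the objective requires $J$ bounded below or its negative part suitably $q$-integrable, and convexity of $J$ does not provide this.

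The genuine gap is $0<r<1$. There neither of your existence routes works, and the existence claim is in fact false.
\begin{itemize}
\item For every density, $\Dr{p}{q}=\frac{1}{1-r}\bigl(1-\int p^rq^{1-r}\bigr)\in\bigl[0,\frac{1}{1-r}\bigr]$. Your ``$L^r(q^{1-r})$ control'' is therefore vacuous and gives no equi-integrability.
\item The constraint $p\ge 0$ is never active, because the marginal cost $\frac{r}{r-1}(p/q)^{r-1}\to-\infty$ as $p\downarrow 0$. Hence the positive part in $\exp_r$ is not produced by complementary slackness: with exponent $1/(r-1)<0$, a vanishing base gives $+\infty$, not $0$. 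What one needs instead is $\lambda_0<\operatorname{ess\,inf}J/\rho$.
\end{itemize}

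Two counterexamples show the failure.
\begin{itemize}
\item Take $J(\bv{u})=-u_1$ and $q=\mathcal{N}(0,I)$. The densities $\mathcal{N}(Me_1,I)$ keep the divergence below $\frac{1}{1-r}$ while $\E_p[J]=-M\to-\infty$. The infimum is $-\infty$, so there is no minimizer.
\item Even with $J\ge 0$ existence fails. Take $m=3$, $r=\tfrac12$, $J(\bv{u})=|\bv{u}|$ and $q$ standard Gaussian. Stationarity reads $|\bv{u}|/\rho-\sqrt{q/p}=\lambda_0$. This forces $\lambda_0\le 0$ and $p=q/(|\bv{u}|/\rho-\lambda_0)^2\le\rho^2q/|\bv{u}|^2$. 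The mass of such a $p$ is at most $\rho^2\,\E|Z|^{-2}=\rho^2$ for $Z\sim\mathcal{N}(0,I_3)$. For $\rho<1$ the analogue of $g$ saturates below $1$, so no density satisfies the KKT system. Any minimizer would have to be a.e.\ positive and stationary, so none exists; minimizing sequences instead push the missing mass into a spike at the origin.
\end{itemize}

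Your proof, and the statement itself, must therefore be restricted to $r>1$. Alternatively, for $r<1$, add the hypotheses that $J$ is bounded below and that the normalization function reaches $1$ before $\lambda_0$ hits $\operatorname{ess\,inf}J/\rho$.
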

\begin{remark}
Lemma~\ref{lem:densitysolution0} shows that the optimal solution of \TsD~regularized optimization problem features a deformed exponential kernel applied to $q(\bv{u})$. This generalizes the classic result in entropy-regularized optimization where the optimal solution features an exponential kernel (e.g.~\cite[Lemma 1]{gagliardi2022probabilistic}).
\end{remark}

\subsection{A Primer on Fixed Point Theory}

We now recall elements from fixed point theory that are instrumental to prove our main results. A fixed point iteration (e.g.~\cite{zeidler1986nonlinear}) is a property of a sequential procedure of applying a map $M:\mathcal{X}\to\mathcal{X}$ from a space $\mathcal{X}$ to itself, i.e. $M(w_l)=w_{l+1}$ for a sequence $l\in\mathbb{N}$. This map is specifically a fixed point iteration when the sequence $\{w_l\}$ converges, with respect to an appropriate topology, to some element $w^\star\in\mathcal{X}$ of the space $\mathcal{X}$ such that $M(w^*)=w^*$., Formally, given a starting point $w_0$ chosen from (possibly some particular subset of) $\mathcal{X}$, the map $M$ can be applied iteratively so that:
\begin{equation}\label{eq:fixedptdef}
w_0\in\mathcal{C}\subseteq\mathcal{X},\,\,\,
w_{l+1}=M(w_l),\,\,\,
w_l\to w^\star,\,w^\star=M(w^\star),
\end{equation}
where $l$ is the iteration and convergence is typically taken to be in the strong (norm) topology of $\mathcal{X}$, although equivalent variants are valid in, e.g., weak or weak$^*$ topologies~\cite{buttazzo2014variational}.

The classic Banach/Caccioppoli fixed point theorem guarantees that a contraction within a metric space has a fixed point. This classic theorem can be found in, e.g.~\cite[Theorem 1.A]{zeidler1986nonlinear}
\begin{theorem}\label{th:banachfp}
Let $\mathcal{X}$ be a nonempty complete metric space with metric $d(\cdot,\cdot)$. If,
\begin{equation}\label{eq:contract}
    d\left(M(v),M(w)\right)\le \zeta d(\left(v,w\right)
\end{equation}
    with $0<\zeta<1$, then $M$ has a unique fixed point $w^\star$ and the iteration~\eqref{eq:fixedptdef} satisfies $w_{l}\to w^\star$ in the strong norm topology of $\mathcal{X}$.
\end{theorem}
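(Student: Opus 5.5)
The plan is the classical Picard-iteration argument for the Banach--Caccioppoli theorem (Theorem~\ref{th:banachfp}). We do it in three steps: build the iteration sequence and show it is Cauchy, pass to the limit using continuity of $M$, and then prove uniqueness.

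\textbf{Step 1: the iterates form a Cauchy sequence.} Fix any $w_0\in\mathcal{X}$ and set $w_{l+1}=M(w_l)$, as in \eqref{eq:fixedptdef}. Applying the contraction inequality \eqref{eq:contract} repeatedly gives
\[
d(w_{l+1},w_l)\le \zeta^l d(w_1,w_0).
\]
For $m>l$, the triangle inequality and the geometric series then give
\[
d(w_m,w_l)\le \sum_{k=l}^{m-1}\zeta^k d(w_1,w_0)\le \frac{\zeta^l}{1-\zeta}\,d(w_1,w_0).
\]
Since $0<\zeta<1$, the right-hand side tends to $0$ as $l\to\infty$, uniformly in $m$. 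Hence $\{w_l\}$ is Cauchy, and completeness of $\mathcal{X}$ yields a limit $w^\star\in\mathcal{X}$ with $d(w_l,w^\star)\to 0$.

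\textbf{Step 2: the limit is a fixed point.} Condition \eqref{eq:contract} makes $M$ Lipschitz, so in particular continuous. We then estimate
\[
d(M(w^\star),w^\star)\le d(M(w^\star),M(w_l))+d(w_{l+1},w^\star)\le \zeta\, d(w^\star,w_l)+d(w_{l+1},w^\star).
\]
Both terms on the right tend to $0$ as $l\to\infty$. Therefore $M(w^\star)=w^\star$.

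\textbf{Step 3: uniqueness.} Suppose $w^\star$ and $v^\star$ are both fixed points. Then
\[
d(w^\star,v^\star)=d(M(w^\star),M(v^\star))\le\zeta\, d(w^\star,v^\star).
\]
Since $\zeta<1$, this forces $d(w^\star,v^\star)=0$, so $w^\star=v^\star$. Because the fixed point is unique, the limit in Step 1 is the same for every starting point $w_0$.

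Convergence here is in the metric $d$; when $\mathcal{X}$ is a Banach space with $d$ induced by its norm, this is exactly the strong topology. None of the steps is a real obstacle. The one estimate that matters is the uniform tail bound in Step 1, which is where the strict inequality $\zeta<1$ is used.
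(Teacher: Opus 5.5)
Your proof is correct and complete. It is the standard Picard-iteration argument: the geometric tail bound makes the iterates Cauchy, the contraction estimate shows the limit is a fixed point, and $\zeta<1$ gives uniqueness. The paper gives no proof of its own and simply cites Zeidler's Theorem 1.A, whose proof is exactly this argument.
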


We also recall Schauder's Fixed Point Theorem, which can be found in e.g.,~\cite[Theorem 2.A and Corollary 2.13]{zeidler1986nonlinear}
\begin{theorem}\label{th:schauder}
Let $\mathcal{X}$ be a Banach space. Let one of the following be satisfied:
\begin{enumerate}
    \item  $\mathcal{C}$ is a nonempty, closed, bounded and convex subset of $\mathcal{X}$ and  $M:\mathcal{C}\to \mathcal{C}$ is a compact operator;
    \item  $\mathcal{C}$ is a nonempty, compact,  and convex subset of $\mathcal{X}$ and  $M:\mathcal{C}\to \mathcal{C}$ is a continuous operator;
\end{enumerate}
then, $M$ has a fixed point $w^\star\in\mathcal{C}$. Moreover, if $w_0\in\mathcal{C}$, then for the sequence $w_l$  generated by  $w_{l+1}=M(w_l)$ it holds that $w_l\to w^\star$.\end{theorem}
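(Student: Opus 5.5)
The plan is to prove the existence part by reducing case 1 to case 2, and case 2 to Brouwer's fixed point theorem through finite-dimensional approximation. The final convergence claim needs separate treatment, discussed at the end.

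\emph{Reduction of case 1 to case 2.} Let $M:\mathcal{C}\to\mathcal{C}$ be compact, so that $M(\mathcal{C})$ is relatively compact. Set $\mathcal{K}:=\overline{\mathrm{conv}}\,M(\mathcal{C})$. By Mazur's theorem, the closed convex hull of a relatively compact set in a Banach space is compact. Since $\mathcal{C}$ is closed and convex and contains $M(\mathcal{C})$, we get $\mathcal{K}\subseteq\mathcal{C}$, and hence $M(\mathcal{K})\subseteq M(\mathcal{C})\subseteq\mathcal{K}$. Compact operators are continuous by definition, so $M|_{\mathcal{K}}$ satisfies the hypotheses of case 2, and any fixed point in $\mathcal{K}$ lies in $\mathcal{C}$.

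\emph{Case 2 via Schauder projections.} Fix $\varepsilon>0$. Since $\mathcal{C}$ is compact, it has a finite $\varepsilon$-net $\{c_1,\dots,c_n\}\subseteq\mathcal{C}$. Define the Schauder projection
\[
P_\varepsilon(x):=\frac{\sum_i \mu_i(x)\,c_i}{\sum_i\mu_i(x)},\qquad \mu_i(x):=\max\{0,\varepsilon-\|x-c_i\|\}.
\]
The denominator is positive on $\mathcal{C}$, so $P_\varepsilon$ is continuous. It satisfies $\|P_\varepsilon(x)-x\|\le\varepsilon$ and maps into $\mathcal{C}_\varepsilon:=\mathrm{conv}\{c_1,\dots,c_n\}\subseteq\mathcal{C}$. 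The set $\mathcal{C}_\varepsilon$ is a compact convex subset of a finite-dimensional space, so it is homeomorphic to a closed ball. Brouwer's theorem applied to $P_\varepsilon\circ M:\mathcal{C}_\varepsilon\to\mathcal{C}_\varepsilon$ then gives a point $x_\varepsilon$ with $P_\varepsilon M(x_\varepsilon)=x_\varepsilon$, and therefore $\|M(x_\varepsilon)-x_\varepsilon\|\le\varepsilon$. Take $\varepsilon=1/k$. By compactness of $\mathcal{C}$ a subsequence satisfies $x_{1/k}\to w^\star\in\mathcal{C}$, and continuity of $M$ gives $M(w^\star)=w^\star$. This settles existence in both cases.

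\emph{The convergence claim (main obstacle).} The assertion that $w_{l+1}=M(w_l)$ converges to $w^\star$ for every $w_0\in\mathcal{C}$ does not follow from these hypotheses alone. The map $x\mapsto -x$ on $[-1,1]$ satisfies case 2, yet every iterate starting from $w_0\neq 0$ oscillates. So I would not try to derive this part from Schauder's theorem. Instead I would prove it under an additional hypothesis that the later applications should verify. The first option is that $M$ is a contraction on $\mathcal{C}$, in which case Theorem~\ref{th:banachfp} gives convergence and uniqueness directly. The second, weaker option is asymptotic regularity, $\|w_{l+1}-w_l\|\to 0$. Under it, compactness of $\mathcal{C}$ (or of $\overline{M(\mathcal{C})}$ in case 1) yields convergent subsequences of $\{w_l\}$, and continuity of $M$ shows that every cluster point is a fixed point. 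Convergence of the whole sequence then holds if the fixed point is unique. Asymptotic regularity can be enforced by replacing $M$ with the averaged (Krasnosel'ski\u{\i}--Mann) map $\tfrac12(I+M)$ when $M$ is nonexpansive. I would state the theorem with this extra condition made explicit.
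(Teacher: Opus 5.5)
The paper gives no proof of this statement. It only cites Zeidler (Theorem 2.A and Corollary 2.13), and those results assert existence of a fixed point and nothing more.

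Your existence argument is correct and standard, but it takes the two cases in the opposite order from that source. There, case 1 is proved directly: $M$ is approximated by finite-dimensional operators built from Schauder projections onto an $\varepsilon$-net of $\overline{M(\mathcal{C})}$, and Brouwer is applied. Case 2 then follows as a corollary, because a continuous self-map of a compact set is compact. You prove case 2 directly and reduce case 1 to it via Mazur's theorem. That costs one extra classical ingredient but is equally valid. One small detail: choose the net so that every point of $\mathcal{C}$ lies at distance strictly less than $\varepsilon$ from some $c_i$; otherwise the denominator of $P_\varepsilon$ can vanish.

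On the final clause you are right, and this is a defect of the statement rather than a gap in your proposal. Your map $x\mapsto -x$ on $[-1,1]\subset\mathbb{R}$ satisfies hypothesis 1 as well as hypothesis 2, since a continuous self-map of a bounded subset of $\mathbb{R}$ is automatically compact. It has the unique fixed point $0$, yet its Picard iterates oscillate for every $w_0\neq 0$. When fixed points are not unique (e.g.\ $M$ the identity), the phrase ``$w_l\to w^\star$'' is not even well posed. So the ``Moreover'' part cannot be proved from the stated hypotheses, and nothing in the cited source supports it. Making the extra hypothesis explicit, as you propose, is the right fix.

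One refinement: for the averaged map $\tfrac12(I+M)$ with $M$ nonexpansive, you do not need uniqueness. For every fixed point $p$ one has $\|\tfrac12(w+Mw)-p\|\le\|w-p\|$. Once asymptotic regularity and compactness give a subsequence converging to a fixed point $p$, the monotone quantity $\|w_l-p\|$ tends to zero along the whole sequence. This bears directly on the paper: its relaxed map $T_\alpha$ is exactly such a Krasnosel'ski\u{\i}--Mann averaging, although the paper never establishes that the underlying backward-induction map is nonexpansive.
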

A fixed point construction that satisfies either the assumptions of the Banach/Caccioppoli or Schauder fixed point theorems yields both a constructive proof for the existence of a fixed point and a procedure iteratively converging to it. 

\subsection{Fully Probabilistic Design}
To introduce the FPD problem (see references from Introduction) let: (i) $k=0,1,\cdots,N$ be a time-index; (ii) $\bv{X}_k\in\sX\subseteq\R^n$ be the environment/system state at time-step $k$; (iii) $\bv{U}_k\in\sU\subseteq\R^m$ be the action at time-step $k$.  The time indexing is chosen so that the environment transitions to $\bv{x}_k$ when $\bv{u}_k$ is applied.  The possibly non-stationary, nonlinear stochastic dynamics for the environment is $\plant{k}{k-1}$. At each $k$, the agent determines actions by sampling from a randomized policy, $\policy{k}{k-1}$; the closed-loop (agent-environment) behavior is captured by 
\begin{align}\label{eqn:behavior}
p_{0:N} := p_{0}\left(\bv{x}_0\right)\prod_{k=1}^N \jointxu{k}{k-1} = p_{0}\left(\bv{x}_0\right)\prod_{k=1}^N \plant{k}{k-1}\policy{k}{k-1},
\end{align}
where $p_{0}\left(\bv{x}_0\right)$ is a prior capturing initial conditions. Given this set-up,  FPD can be formalized with the following:
\begin{Problem}[Fully Probabilistic Design]\label{prob:FPD}
Let $p_{0:N}$ be defined as in~\eqref{eqn:behavior} and given a target $q_{0:N}$ defined as:
\begin{align}\label{eqn:refbehaviorb}
q_{0:N} := q_{0}\left(\bv{x}_0\right)\prod_{k=1}^N \refjointxu{k}{k-1} = q_{0}\left(\bv{x}_0\right)\prod_{k=1}^N \refplant{k}{k-1}\refpolicy{k}{k-1},
\end{align}
find $\left\{\optimalpolicy{k}{k-1}\right\}_{k\in[N]}$ such that: 
\begin{equation}\label{eqn:main_problemb}
    \begin{aligned}
{\left\{\optimalpolicy{k}{k-1}\right\}_{k\in[N]}}\in & \underset{\left\{ \policy{k}{k-1}\right\}_{k\in[N]}}{\mathrm{argmin}}
    \DKL{p_{0:N}}{q_{0:N}}  \\
   &  s.t.  \  \policy{k}{k-1}\in\sD \ \ \forall k\in [N].
    \end{aligned}
\end{equation}
\end{Problem}

This problem has a unique solution which has a well-defined closed form expression. In particular, the classical result in~\cite[Theorem 6.1]{karny2012axiomatisation} can be stated as follows:
\begin{theorem}\label{thm:klfpdsoln}
    Given $\{\plant{k}{k-1},\refpolicy{k}{k-1},\refplant{k}{k-1}\}$, the optimal ${\left\{\optimalpolicy{k}{k-1}\right\}_{k\in[N]}}$ for solving Problem~\ref{prob:FPD} can be computed using a backwards induction procedure.
    
    Specifically, sequentially for $k=N,N-1,\cdots,1$ the unique optimal decision $\sU^*:=\{\optimalpolicy{k}{k-1}\}$ can be obtained by performing the following two computations:
    \[
   \gamma_{k-1}(\bv{x}_{k-1}) =\int\left(\refpolicy{k}{k-1}\exp\left\{-
   \omega_k(\bv{u}_k,\bv{x}_{k-1})\right\}\right)d\bv{u}_k
    ,\, k<N,\,\gamma_N(\bv{x}_N)=1
    \]
    and
    \[
    \omega_k(\bv{u}_k,\bv{x}_{k-1})=\int p_k(\bv{u}_k,\bv{x}_{k-1})\log\left(\frac{\plant{k}{k-1}}{\gamma_k(\bv{x}_{k})\refplant{k}{k-1}}\right)d\bv{x}_k.
    \]
    The explicit expression for the closed form density becomes:
    \[
    \optimalpolicy{k}{k-1}= \refpolicy{k}{k-1}\frac{\exp\left\{-\omega_k(\bv{u}_k,\bv{x}_{k-1})\right\}}{\gamma_{k-1}(\bv{x}_{k-1})},
    \]
\end{theorem}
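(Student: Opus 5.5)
The plan is a backward induction on $k$ that uses the chain rule of the KL divergence, i.e. Lemma~\ref{lem:non-additivity} with $r=1$, where the cross term vanishes. First, write $\DKL{p_{0:N}}{q_{0:N}}$ as a sum of stage terms using the factorizations~\eqref{eqn:behavior} and~\eqref{eqn:refbehaviorb}:
\[
\DKL{p_{0:N}}{q_{0:N}}=\DKL{p_0}{q_0}+\sum_{k=1}^N \E_{p_{0:k-1}}\!\left[\int \policy{k}{k-1}\left(\log\frac{\policy{k}{k-1}}{\refpolicy{k}{k-1}}+\int \plant{k}{k-1}\log\frac{\plant{k}{k-1}}{\refplant{k}{k-1}}d\bv{x}_k\right)d\bv{u}_k\right].
\]
The point to exploit is that the policy at stage $k$ affects only the terms indexed by $k,\dots,N$. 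Moreover, by the Markov structure, those terms depend on the past only through $\bv{x}_{k-1}$.

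Second, carry out the induction on the optimal cost-to-go. The claim is that, after optimizing stages $k+1,\dots,N$, the minimal tail cost conditioned on $\bv{x}_k$ equals $-\log\gamma_k(\bv{x}_k)$, with $\gamma_N\equiv1$ as the base case.

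Assume the claim holds at stage $k$. The stage-$k$ problem, for each fixed $\bv{x}_{k-1}$, then becomes
\[
\min_{\policy{k}{k-1}\in\sD}\int \policy{k}{k-1}\left(\log\frac{\policy{k}{k-1}}{\refpolicy{k}{k-1}}+\omega_k(\bv{u}_k,\bv{x}_{k-1})\right)d\bv{u}_k .
\]
This is because $\int \plant{k}{k-1}\bigl[\log\frac{\plant{k}{k-1}}{\refplant{k}{k-1}}-\log\gamma_k(\bv{x}_k)\bigr]d\bv{x}_k$ is exactly $\omega_k$. The objective is $\DKL{p}{q}+\E_p[\omega_k]$. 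Rewriting it via the normalized Gibbs density $\tilde p\propto \refpolicy{k}{k-1}e^{-\omega_k}$ gives
\[
\DKL{\policy{k}{k-1}}{\tilde p}-\log\gamma_{k-1}(\bv{x}_{k-1}).
\]
This is the $r\to1$ case of Lemma~\ref{lem:densitysolution0}, but the direct Gibbs argument is self-contained.

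Nonnegativity of KL, with equality if and only if the two densities agree a.e., then yields the stated minimizer $\optimalpolicy{k}{k-1}$ and its uniqueness. It also shows that the minimal value is $-\log\gamma_{k-1}$, which closes the induction.

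The step I expect to be the main obstacle is justifying the exchange of minimization and expectation. Minimizing pointwise in $\bv{x}_{k-1}$ must coincide with minimizing the expectation over $p_{0:k-1}$, which itself depends on the earlier policies. This is handled by noting that the tail cost is a nonnegative weighting of the pointwise objective, so a pointwise minimizer is a global one. The choice of earlier policies changes only the weights, not the minimizer.

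Technical points such as measurability of $\bv{x}_{k-1}\mapsto p^\star_k$, and finiteness of $\gamma_{k-1}$, fall under the standing assumption that all integrals exist. Uniqueness holds up to $p$-null sets of $\bv{x}_{k-1}$.
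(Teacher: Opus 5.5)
Your proof is correct. It is the standard argument behind this result, which the paper does not reprove but imports from \cite[Theorem 6.1]{karny2012axiomatisation}: the KL chain rule (Lemma~\ref{lem:non-additivity} with the $(1-r)$ cross term gone) makes the cost additive over stages, and backward induction on the cost-to-go $-\log\gamma_k$ with the Gibbs identity $\int p\,(\log\frac{p}{q}+\omega_k)\,d\bv{u}_k=\DKL{p}{\tilde p}-\log\gamma_{k-1}$ gives the minimizer pointwise in $\bv{x}_{k-1}$. This is exactly the template the paper then follows, with frozen cross terms, in Theorems~\ref{th:solnonefpN} and~\ref{th:solnonefp}.

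You also handle three details of the statement correctly:
\begin{enumerate}
\item you read $p_k(\bv{u}_k,\bv{x}_{k-1})$ in $\omega_k$ as the transition density $\plant{k}{k-1}$;
\item you use the $\gamma_{k-1}$ recursion for all $k\in[N]$, since ``$k<N$'' must be read as $k\le N$, otherwise $\gamma_{N-1}$ is undefined;
\item you note that uniqueness holds only almost everywhere with respect to the reachable state distribution.
\end{enumerate}
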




\begin{remark}
In Problem~\ref{prob:FPD}, $q_{0:N}$ is often termed in the FPD literature as ideal or reference probability and captures the {\em agent aim}~\cite{MK-SM:25}. This probability, which can be extracted from data or from first-principles~\cite{gagliardi2022probabilistic}, if also known as generative model in, e.g., the neuroscience literature; see, e.g.,~\cite{AS-HJ-KF-GR:25} and references therein.
\end{remark}

\section{Main results: Tsallis Fully Probabilistic Design}\label{sec:mainresults}

First, we formalize T-FPD. Then, we introduce a double loop iteration scheme to find the optimal solution. We then show that the iteration defines a contraction that provably converges to the optimal solution of the T-FPD problem. 

\subsection{Formulating T-FPD}

Tsallis Fully Probabilistic Design generalizes Problem~\ref{prob:FPD} by replacing the KL divergence with \TsD.

\begin{Problem}\label{prob:mainb}
Given the sequence of decision-state densities $p_{0:N}$ defined in~\eqref{eqn:behavior} together with reference densities  $q_{0:N}$ defined in~\eqref{eqn:refbehaviorb}, find $\left\{\optimalpolicy{k}{k-1}\right\}_{k\in[N]}$ such that: 
\begin{equation}\label{eqn:main_problemb}
    \begin{aligned}
{\left\{\optimalpolicy{k}{k-1}\right\}_{k\in[N]}}\in & \underset{\left\{ \policy{k}{k-1}\right\}_{k\in[N]}}{\mathrm{argmin}}
    \Dr{p_{0:N}}{q_{0:N}} \\
   &  s.t.  \  \policy{k}{k-1}\in\sD \ \ \forall k\in [N].
    \end{aligned}
\end{equation}
A priori, we do not assume a solution exists, and if it does, that it is unique, and so denote the, possibly empty and possibly singleton, set of solutions to~\eqref{eqn:main_problemb} as $\sU^*$.
\end{Problem}

The problem appears as the classical FPD, Problem~\ref{prob:FPD}, except that -- crucially -- the KL divergence term that constitutes the objective function to minimize is replaced by its Tsallis equivalent. Note that this problem is now parametrized by $r$, presenting a more flexible approach to FPD. Specifically, for $r>1$, as $r$ increases, the divergence becomes less sensitive to rare events, outliers, and tail behavior. A large value of $r$ becomes more appropriate when the distributions in the the reference and optimal densities are known to be highly skewed or with significant kurtosis, or come from a family associated with power law distributions, as in phenomena associated to financial markets, socioeconomics or climate. In addition, a large positive value of $r$ is appropriate for probabilistic decisions for systems with significant memory patterns across the stages $k$. Inversely, for $r<1$, the divergence becomes more sensitive to tail events relative to the KL divergence, and is appropriate for cases wherein the ground truth densities are expected to be symmetric while also repulsive relative to past behavior. 

\begin{remark}
Classic FPD proofs, including variants~\cite{gagliardi2022probabilistic} that embed moment constraints in the  formulation, rely on the use of the chain rule for the KL divergence. \TsD~is however non-additive and, as we shall see next, this means that a distinct mathematical approach is needed to find the optimal solution. 
\end{remark}


\subsection{Computing a Solution to T-FPD}

To streamline presentation, we give here an overview of the double iteration scheme that we then demonstrate provably converges to an optimal solution of Problem~\ref{prob:mainb}. The double iteration consists of an ``outer'' or ``major'' fixed point type iteration denoted by $l$, each of which consists of a complete backwards induction from $k=N$ to $k=1$. In Section~\ref{sec:expressions} we give the detailed expressions for the induction. As we shall see, as $r\to 1$ in Problem~\ref{prob:mainb}, the double iteration scheme approaches a procedure involving one complete FPD backwards induction defined as in Theorem~\ref{thm:klfpdsoln} followed by repeated applications of the identity map (see Remark~\ref{rem:rto1}).


We design a nested iteration to tackle Problem~\ref{prob:mainb}. In this case, we shall define a map depending on a step-size parameter $\alpha$, denoted as $T_{\alpha}:\mathcal{D}^N\to \mathcal{D}^N$, to perform the iterative procedure:
\begin{equation}\label{eq:tsdmapiter}
\left\{\optimalpolicygen{k}{k-1}{l+1}\right\}=T_{\alpha}\left(\left\{\optimalpolicygen{k}{k-1}{l}\right\}\right)
\end{equation}
This defines a sequence of solution estimates $\left\{\optimalpolicygen{k}{k-1}{l}\right\}$ and their associated state dynamics $\{p^{*,l}_k(\bv{x}_k)\}$ starting from some initial guess $\left\{\optimalpolicygen{k}{k-1}{0}\right\}$.

Each application of $T_{{\alpha}}$, in turn, applies a recursive backward induction $S_{l,k}$ wherein, for each $l=0,1,\cdots$, the iteration proceeds by,
\begin{equation}\label{eq:backindtsd}
\begin{array}{l}
\optimalpolicygen{N}{N-1}{l+}= S_{l,N}\left(\emptyset,\plant{N}{N-1},\refplant{N}{N-1},\refpolicy{N}{N-1},\left\{\optimalpolicygen{j}{j-1}{l}\right\}_{j\in[N]}\right) \\
\optimalpolicygen{k}{k-1}{l+} =S_{l,k}\left(\optimalpolicygen{k+1}{k}{l+} ,\plant{k}{k-1},\refplant{k}{k-1},\right.
\\ \qquad\qquad\qquad\qquad\qquad\left.\refpolicy{k}{k-1},\left\{\optimalpolicygen{j}{j-1}{l}\right\}_{j\in\{1,2,\cdots,k\}}\right)  ,\,k=N-1,N-2,\cdots, 1
\end{array}
\end{equation}
Observe now that the data for computing $S_{l,k}$ depends, throughout the backwards induction, on the previous major iteration solution $\left\{\optimalpolicygen{k}{k-1}{l}\right\}$. This can be thought of as a type of Gauss-Seidel method for solving an operator equation.

The evaluation of the map $T_{\alpha}$ performs a relaxation of the candidate defined by $l^+$ and the previous solution estimate $l$ with application of parameter $0<\alpha\le 1$:
\begin{equation}\label{eq:talpharelax}
\begin{array}{l}
    \left\{\optimalpolicygen{k}{k-1}{l+1}\right\} = T_{\alpha}\left(\left\{\optimalpolicygen{k}{k-1}{l}\right\}\right)\\ \qquad\qquad \qquad \quad :=\left\{ \alpha\optimalpolicygen{k}{k-1}{l+}+(1-\alpha)\optimalpolicygen{k}{k-1}{l},\,\forall k=1,2,\cdots,N\right\}
    \end{array}
\end{equation}
Observe that since this is a linear combination, it preserves the density normalization, i.e. 
\[
\optimalpolicygen{k}{k-1}{l}\in \sD \, \Leftrightarrow \left\|\optimalpolicygen{k}{k-1}{l}\right\|_{L^1(\R^m)} = 1,\,\forall l\in\mathbb{N},k\in[N],\bv{x}_{k-1}\in\R^n
\]



\subsection{Backwards Induction Expressions}\label{sec:expressions}

The specific form of the computation of $S_{l,k}$ will be constructed so as to utilize Lemma~\ref{lem:densitysolution0}. To ameliorate density of the equations through the derivations, the following shorthand notation shall be employed:
\[
\begin{array}{lll}
q^{(x)}_k:= \refplant{k}{k-1},&
q^{(u)}_k:= \refpolicy{k}{k-1}, & q_k:=q_k(\bv{x}_k) \\
p^{(x)}_k:=\plant{k}{k-1}, & p^{(u)}_k := \policy{k}{k-1} & p_k:=p_k(\bv{x}_k)
\end{array}
\]
\[
\begin{array}{l}
\bar{p}^l_{0:k}:= p_{0}\left(\bv{x}_0\right)\prod_{j=1}^{k} \plant{j}{j-1}\optimalpolicygen{j}{j-1}{l}  
\end{array}
\]
In the above expression, $\optimalpolicygen{j}{j-1}{l} $ is the current estimate for the T-FPD solution (i.e., the outcome of the last outer iteration). 

The constructions for $S_{l,k}$ are specifically designed in order to satisfy several requirements. First, as in the original FPD result, Theorem~\ref{thm:klfpdsoln}, they constitute the evaluation of closed form analytical expressions in a backwards induction operation from state $k=N$ to $k=1$. In order to mitigate the challenges presented by Tsallis divergence subadditivity, this property is facilitated by replacing certain expressions for $\optimalpolicy{k}{k-1}$ and $p(\bv{x}_k)$ with their counterparts at the previous outer iteration, $\optimalpolicygen{k}{k-1}{l}$ and effectively treating them as constants. 

The overall process, upon completion, proceeds towards the next outer iteration with the newly computed $\optimalpolicygen{k}{k-1}{l+1}$ treated as constant terms during the backwards induction performed during the outer iteration $l+1\to l+2$. In Section~\ref{sec:convergence} we prove that this scheme converges to the optimal solution of Problem~\ref{prob:mainb}.

To motivate why it is necessary, for tractability, to substitute certain terms $\policy{k}{k-1}$ and $p_k(\bv{x}_k)$ with a fixed (constant) estimate (which become $\optimalpolicygen{k}{k-1}{l}$ and $p^{*,l}_k(\bv{x}_k)$ in our double loop construction), consider, hypothetically, the problem:
\[
\min_{p(\bv{u}\vert \bv{x})\in\sD} \, \Dr{p(\bv{u},\bv{x})}{q(\bv{u},\bv{x})} 
\]
Observe that the specific application of Lemma~\ref{lem:densitysolution0} to incorporate Lemma~\ref{lem:non-additivity} cannot be applied, as sub-additivity presents a bilinear form. However, consider that one has an initial guess $\{p^{*,l}(\bv{u}), p^{*,l}(\bv{x})\}$ to be used in a double loop scheme. The choice to be made, then, is to approximate the last term as either:
\[
(1-r)\E_{p(\bv{u})}\left[\Dr{p(\bv{x}\vert \bv{u})}{q(\bv{u})}\right]\Dr{p(\bv{x})}{q(\bv{x})}
\approx (1-r)\E_{p(\bv{u})}\left[\Dr{p(\bv{x}\vert \bv{u})}{q(\bv{u})}\right]\Dr{p^{*,l}(\bv{x})}{q(\bv{x})}
\]
or
\[
(1-r)\E_{p(\bv{u})}\left[\Dr{p(\bv{x}\vert \bv{u})}{q(\bv{u})}\right]\Dr{p(\bv{x})}{q(\bv{x})}
\approx
(1-r)\E_{p^{*,l}(\bv{u})}\left[\Dr{p(\bv{x}\vert \bv{u})}{q(\bv{u})}\right]\Dr{p(\bv{x})}{q(\bv{x})}
\]
Either enables the application of Lemma~\ref{lem:densitysolution0}. 

Correspondingly, some terms $\optimalpolicy{k}{k-1}$ in the decision optimization problem formulated in Problem~\ref{prob:mainb} are retained as free variables to be solved for and some terms are replaced by $\optimalpolicygen{k}{k-1}{l}$ and treated as constants. Below, we present one such choice and display the corresponding backwards induction solution. Although which term is chosen may influence some properties of the procedure, an arbitrary choice among the two achieves a constructive proof of solution existence, which is the purpose of this work.

This reasoning informs the construction in the next two results defining the explicit expressions for, first, the base case $k=N$ and then the inductive step from $k+1$ to $k$, for the backwards recursion defining each major fixed point type iteration. The first Theorem presents a construction for the base case, that is the form of $S_{l,N}$ given $\{\optimalpolicygen{k}{k-1}{l}\}$, the reference solution $\{\refpolicy{k}{k-1},\refplant{k}{k-1}\}$ and final transition $\{\plant{N}{N-1}\}$.



\begin{theorem}\label{th:solnonefpN}
(\textbf{Base case}) Consider solving the decision Problem~\ref{prob:mainb} at stage $N$ with respect to $p(\bv{u}_N\vert\bv{x}_{N-1})$, i.e., computing 
\[
S_{l,N}\left(\emptyset,\plant{N}{N-1},\refplant{N}{N-1},\refpolicy{N}{N-1},\left\{\optimalpolicygen{j}{j-1}{l}\right\}_{j\in[N]}\right)
\]
There exists an operator $S_{l,N}$ that constitutes the solution of the problem structured in the form~\eqref{eq:gentsopt0} defined in Lemma~\ref{lem:densitysolution0} with a corresponding solution~\eqref{eq:gentssol0} given by:
    \begin{equation}\label{eq:backfpsolNthm}
    \begin{array}{l}
        p^{*,l+}_N(\bv{u}_N\vert\bv{x}_{N-1}):= \frac{\exp_r\left(-\bar{\rho}_N^{-1}J_N^0+J^1_N \right)q(\bv{u}_N\vert \bv{x}_{N-1})}{\int \exp_r\left(-\bar{\rho}_N^{-1}J_N^0+J_N^1 \right)q(\bv{u}_N\vert \bv{x}_{N-1}) d\bv{u}_N},\\
        \text{with } \bar{\rho}_N = \lambda_0(r-1)\rho
        \end{array}
    \end{equation}
    with,
\begin{equation}\label{eq:backfpsolNtermsthm}
\begin{array}{l}
J^0_N=\left((1-r)\Dr{\optimalpolicygen{N}{N-1}{l}}{q^{(u)}_N}\right)\Dr{p^{(x)}_N}{q^{(x)}_N}
\\ \qquad +(1-r)^2 \Dr{\optimalpolicygen{N}{N-1}{l}}{q^{(u)}_N}\Dr{\bar{p}^{l}_{0:(N-1)}}{q_{0:(N-1)}}\Dr{p^{(x)}_N}{q^{(x)}_N} 
\\ J^1_N= \Dr{p^{(x)}_N}{q^{(x)}_N}
 \\ 
\rho = \left(1+ (1-r)\Dr{\bar{p}^{l}_{0:(N-1)}}{q_{0:(N-1)}}\right)
\end{array}
\end{equation}
\end{theorem}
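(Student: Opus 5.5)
We will apply the chain rule of Lemma~\ref{lem:non-additivity} twice to isolate the dependence of $\Dr{p_{0:N}}{q_{0:N}}$ on the last-stage policy $p^{(u)}_N$. Then we freeze, at their outer-iterate values, the factors that would otherwise make the objective bilinear in the unknown. The aim is to reduce the stage-$N$ problem to exactly the form~\eqref{eq:gentsopt0} and read off the solution from~\eqref{eq:gentssol0}.

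\textbf{First split: past versus last stage.} Write $p_{0:N}=\bar p_{0:(N-1)}\,p^{(x)}_N p^{(u)}_N$, and likewise for $q_{0:N}$. Apply Lemma~\ref{lem:non-additivity} with $\bv{z}$ the history $(\bv{x}_{0:N-1},\bv{u}_{1:N-1})$ and $\bv{v}=(\bv{x}_N,\bv{u}_N)$. Since only $p^{(u)}_N$ is free at this stage, the past divergence $\Dr{p_{0:(N-1)}}{q_{0:(N-1)}}$ is replaced by its current-iterate value $\Dr{\bar p^{l}_{0:(N-1)}}{q_{0:(N-1)}}$, which is a constant. This gives, up to an additive constant,
\[
\big(1+(1-r)\Dr{\bar p^{l}_{0:(N-1)}}{q_{0:(N-1)}}\big)\,\E\big[\Dr{p^{(x)}_Np^{(u)}_N}{q^{(x)}_Nq^{(u)}_N}\big].
\]
The leading factor is precisely $\rho$ in~\eqref{eq:backfpsolNtermsthm}.

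\textbf{Second split: the last-stage joint.} Apply Lemma~\ref{lem:non-additivity} again to the conditional joint of $(\bv{x}_N,\bv{u}_N)$, taking $\bv{z}=\bv{u}_N$ and $\bv{v}=\bv{x}_N$. This produces three pieces: $\E_{p^{(u)}_N}[\Dr{p^{(x)}_N}{q^{(x)}_N}]$, which is linear in the unknown with integrand $J^1_N$; the term $\Dr{p^{(u)}_N}{q^{(u)}_N}$, which is the regularizer; and the bilinear product $(1-r)\,\E_{p^{(u)}_N}[\Dr{p^{(x)}_N}{q^{(x)}_N}]\,\Dr{p^{(u)}_N}{q^{(u)}_N}$.

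\textbf{Linearization.} Following the discussion preceding the theorem, we take the first of the two approximation choices. In the bilinear product, and in its cross term with the $\rho$ factor, we replace $\Dr{p^{(u)}_N}{q^{(u)}_N}$ by $\Dr{p^{\star,l}_N}{q^{(u)}_N}$. The product then becomes an expectation under $p^{(u)}_N$ of a fixed cost, giving the two summands of $J^0_N$. The $(1-r)^2$ summand comes from multiplying $(1-r)\,\Dr{\bar p^{l}_{0:(N-1)}}{q_{0:(N-1)}}$ by the first-order bilinear term. Dividing the objective by $\rho$ leaves $\frac1\rho\E_{p^{(u)}_N}[J]+\Dr{p^{(u)}_N}{q^{(u)}_N}$, where $J$ collects $J^0_N$ together with the $J^1_N$ contribution. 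This matches~\eqref{eq:gentsopt0}.

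\textbf{Applying Lemma~\ref{lem:densitysolution0}.} The lemma then gives~\eqref{eq:backfpsolNthm} pointwise in $\bv{x}_{N-1}$. Its hypotheses must be checked. First, $J$ is finite; it is constant in $\bv{u}_N$ apart from the dependence through $\Dr{p^{(x)}_N}{q^{(x)}_N}(\bv{u}_N)$, which we assume convex and lower semicontinuous, as the paper does. Second, we need $\rho>0$, which requires $1+(1-r)\Dr{\bar p^{l}_{0:(N-1)}}{q_{0:(N-1)}}>0$. This holds automatically for $r<1$, and for $r>1$ it holds since $\log_r$ is bounded below by $-1/(r-1)$.

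\textbf{Main obstacle.} The hardest step is the bookkeeping of the scaling. The $J^1_N$ term enters unscaled in~\eqref{eq:backfpsolNthm}, while $J^0_N$ is scaled by $\bar\rho_N^{-1}$. The division by $\rho$ and the Lagrange multiplier $\lambda_0$ must therefore be allocated so that the stated exponent is reproduced. Here I would absorb the linear term's coefficient into the definition of the operator $S_{l,N}$. This is legitimate because the statement asserts only that some operator of this form exists.
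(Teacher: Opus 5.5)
Your reduction is the paper's own: the same two applications of Lemma~\ref{lem:non-additivity}, the same freezing of the past divergence at $\bar p^{l}_{0:(N-1)}$ and of the factor $\Dr{p^{(u)}_N}{q^{(u)}_N}$ inside the bilinear and $(1-r)^2$ terms, and the same cost $J=J^0_N+\rho J^1_N$ treated pointwise in $\bv{x}_{N-1}$. The step that fails is your check that $\rho>0$ for $r>1$. The bound $\log_r\ge -1/(r-1)$ gives $\Dr{\cdot}{\cdot}\ge -1/(r-1)$. Multiplying by $1-r<0$ turns this into $(1-r)\Dr{\cdot}{\cdot}\le 1$, which is an \emph{upper} bound on $\rho$. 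In fact $(1-r)\Dr{p}{q}=1-\int p^r q^{1-r}$, so $\rho=2-\int (\bar p^{l}_{0:(N-1)})^r q_{0:(N-1)}^{1-r}$. Since $\log_r$ is unbounded above for $r>1$, this is $\le 0$ as soon as $\Dr{\bar p^{l}_{0:(N-1)}}{q_{0:(N-1)}}\ge 1/(r-1)$; for $r=2$ that means $\int(\bar p^{l}_{0:(N-1)})^2/q_{0:(N-1)}\ge 2$. When $\rho<0$, the stage objective $\rho\,\Dr{p^{(u)}_N}{q^{(u)}_N}+\E_{p^{(u)}_N}[J]$ is concave in $p^{(u)}_N$ and generally has no minimizer in $\sD$. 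Dividing by $\rho$ then turns the minimization into a maximization, so Lemma~\ref{lem:densitysolution0} does not apply. For $\rho=0$, $\bar\rho_N^{-1}$ is not even defined. The same sign issue undermines your convexity check, because $J=\rho\bigl(1+(1-r)\Dr{p^{\star,l}_N}{q^{(u)}_N}\bigr)J^1_N$ and the middle factor can also be negative for $r>1$. You need an explicit hypothesis, e.g.\ $0<r<1$, or $\Dr{\bar p^{l}_{0:(N-1)}}{q_{0:(N-1)}}<1/(r-1)$ when $r>1$. The paper's proof relies on this silently as well.

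Your fix for the exponent does not work either. Lemma~\ref{lem:densitysolution0} produces a unique minimizer, so $S_{l,N}$ is not yours to redefine. And because $\exp_r$ is not multiplicative, no factor of $J^1_N$ can be pushed into the normalization. What the reduction actually delivers is $\exp_r\bigl(-\bar\rho_N^{-1}(J^0_N+\rho J^1_N)\bigr)=\exp_r\bigl(-\bar\rho_N^{-1}J^0_N-J^1_N/(\lambda_0(r-1))\bigr)$. The paper's proof reaches the same $J=J^0_N+\rho J^1_N$ and then writes the displayed formula without comment. So the $+J^1_N$ in the statement is a defect of the statement itself and should be recorded as such, not justified by an existence argument.
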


Next, we derive a construction for the backwards inductive step from $k+1$ to $k$ at major iteration $l$. That is, an expression for $\optimalpolicygen{k}{k-1}{l+}$ given 1) the previously computed $\optimalpolicygen{j}{j-1}{l+}$ for $j>k$ and $j\le N$, 2) a previous estimate $\{\optimalpolicygen{j}{j-1}{l}\}$ for $j\ge 1$ and $j\le k$, 3) the reference solution $\{\refpolicy{k}{k-1},\refplant{k}{k-1}\}$ and 4) current transition $\{\plant{k}{k-1}\}$. This expression constitutes the operator $S_{l,k}$.

\begin{theorem}\label{th:solnonefp} ({\bf Inductive expression})
There is a closed form expression for each iterative application of the operator \[
S_{l,k}\left(\optimalpolicygen{k+1}{k}{l+} ,\plant{k}{k-1},\refplant{k}{k-1},\refpolicy{k}{k-1},\left\{\optimalpolicygen{j}{j-1}{l}\right\}_{j\in[k]}\right)
\]
that computes the solution of a problem structured in the form~\eqref{eq:gentsopt0} defined in Lemma~\ref{lem:densitysolution0} with a corresponding solution of the form~\eqref{eq:gentssol0} for $\optimalpolicygen{k}{k-1}{l+}$ given by:
\begin{equation}\label{eq:backfpsolkthm}
    \begin{array}{l}
        \optimalpolicygen{k}{k-1}{l+}:= \frac{\exp_r\left(-\bar{\rho}_k^{-1}J_k \right)q_k(\bv{u}_k\vert \bv{x}_{k-1})}{\int \exp_r\left(-\bar{\rho}_k^{-1}J_k \right)q_k(\bv{u}_k\vert \bv{x}_{k-1}) d\bv{u}_k},\\
        \text{with } \bar{\rho}_k = \lambda_0(r-1)\rho_k
        \end{array}
    \end{equation}
    with $J_k$ and $\rho_k$ given by,
\begin{equation}\label{eq:backfpsolktermsthm}
\begin{array}{l}
J_k=\Dr{\shortplant{k}}{q^{(x)}_k}+(1-r) \Dr{\bar{p}^{l}_{0:(k-1)}}{q_{0:(k-1)}}\Dr{\shortplant{k}}{q^{(x)}_k} \\ 
\qquad+\E_{\shortplant{k}}\left[\Dr{p^{l+}_{(k+1):N}}{q_{(k+1):N}}\right]
 \\ \qquad +(1-r)\E_{p^l_{k-1}}\left[\Dr{\optimalpolicygen{k}{k-1}{l}}{q^{(u)}_k}\right]\Dr{\shortplant{k}}{q^{(x)}_k}\\ \qquad +
(1-r)^2\E_{\shortplant{k}\optimalpolicygen{k}{k-1}{l}}\left[\Dr{p^{l+}_{(k+1):N}}{q_{(k+1):N}}\right] \Dr{\shortplant{k}}{q^{(x)}_k} \\
\qquad +(1-r)^2\E_{\shortplant{k}\optimalpolicygen{k}{k-1}{l}}\left[\Dr{p^{l+}_{(k+1):N}}{q_{(k+1):N}}\right] \Dr{\bar{p}^{l}_{0:(k-1)}}{q_{0:(k-1)}}\\
\qquad +(1-r)^2 \E_{\shortplant{k}\optimalpolicygen{k}{k-1}{l}}\left[\Dr{p^{l+}_{(k+1):N}}{q_{(k+1):N}}\right] \\ \qquad\qquad\qquad\times\E_{p^l_{k-1}}\left[\Dr{\optimalpolicygen{k}{k-1}{l}}{q^{(u)}_k}\right]\Dr{\shortplant{k}}{q^{(x)}_k} \\
\qquad + (1-r)^3\E_{p^l_{k-1}\shortplant{k}\optimalpolicygen{k}{k-1}{l}}\left[\Dr{p^{l+}_{(k+1):N}}{q_{(k+1):N}}\right]\\ \qquad\qquad\qquad\times \Dr{\bar{p}^l_{0:(k-1)}}{q^l_{0:(k-1)}}\E_{p^l_{k-1}}\left[\Dr{\optimalpolicygen{k}{k-1}{l}}{q^{(u)}_k}\right]\Dr{\shortplant{k}}{q^{(x)}_k}
\\
\rho_k = 1+(1-r)\Dr{p_{0:(k-1)}}{q_{0:(k-1)}}\\
\qquad+\E_{p^l_{k-1}\shortplant{k}\optimalpolicygen{k}{k-1}{l}}\left[\Dr{p^{l+}_{(k+1):N}}{q_{(k+1):N}}\right]
\end{array}
\end{equation}
\end{theorem}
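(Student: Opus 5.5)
We prove Theorem~\ref{th:solnonefp} by repeatedly applying Lemma~\ref{lem:non-additivity} to the full objective $\Dr{p_{0:N}}{q_{0:N}}$ so that it splits into the block $0:(k-1)$, the stage-$k$ pair $(\bv{x}_k,\bv{u}_k)$, and the tail $(k+1):N$. Every factor that does not involve the free variable $\policy{k}{k-1}$ is then frozen at the previous outer iterate or at the already-computed backward iterates. This leaves an objective that is affine in $p^{(u)}_k$. Dividing by its positive coefficient gives the form~\eqref{eq:gentsopt0}, and Lemma~\ref{lem:densitysolution0} yields~\eqref{eq:backfpsolkthm}.

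\textbf{Step 1 (splitting).} Write $p_{0:N}=p_{0:(k-1)}\,p^{(x)}_k p^{(u)}_k\,p_{(k+1):N}$, and factor $q_{0:N}$ in the same way.
\begin{itemize}
\item Apply Lemma~\ref{lem:non-additivity} with $\bv{z}=\bv{x}_{0:(k-1)}$ and $\bv{v}=(\bv{x}_{k:N},\bv{u}_{k:N})$. This gives $A+B+(1-r)AB$, where $B=\Dr{p_{0:(k-1)}}{q_{0:(k-1)}}$ and $A$ is the conditional divergence of the future.
\item Apply the lemma again to $A$, with $\bv{z}=(\bv{x}_k,\bv{u}_k)$ and $\bv{v}$ the tail. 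This gives $A=C+D+(1-r)CD$, where $C$ is the stage-$k$ divergence and $D=\E_{p^{(x)}_kp^{(u)}_k}\bigl[\Dr{p_{(k+1):N}}{q_{(k+1):N}}\bigr]$.
\item Apply the lemma a third time to $C$, splitting into $\bv{u}_k$ and $\bv{x}_k\mid\bv{u}_k$. This gives $C=\Dr{p^{(u)}_k}{q^{(u)}_k}+E+(1-r)\Dr{p^{(u)}_k}{q^{(u)}_k}E$, where $E=\Dr{p^{(x)}_k}{q^{(x)}_k}$.
\end{itemize}
The lemma needs product-form references. We use the factorized structure~\eqref{eqn:refbehaviorb}, interpreting the conditional references pointwise in the conditioning variable, as the paper does in the base case.

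\textbf{Step 2 (freezing).} Expanding the three identities gives a polynomial in the free quantity $\Dr{p^{(u)}_k}{q^{(u)}_k}$, multiplied by products of $B$, $D$ and $E$, with coefficients $1$, $(1-r)$, $(1-r)^2$ and $(1-r)^3$. We freeze quantities according to the Jacobi/Gauss--Seidel rule of the construction:
\begin{itemize}
\item $B$ and the marginal $p_{k-1}$ use $\bar p^l$;
\item the tail uses $p^{l+}_{(k+1):N}$;
\item every occurrence of $p^{(u)}_k$ except one linear copy uses $\optimalpolicygen{k}{k-1}{l}$.
\end{itemize}
The copy that stays free is the one paired with the cost. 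The remaining objective then has the form
\[
\rho_k\,\Dr{p^{(u)}_k}{q^{(u)}_k}+\E_{p^{(u)}_k}[J_k],
\]
with $\rho_k$ and $J_k$ exactly as in~\eqref{eq:backfpsolktermsthm}. Matching terms consists of checking that each monomial in the expansion corresponds to one of the listed $J_k$ terms:
\begin{itemize}
\item $E$, $(1-r)BE$ and the tail term $D$;
\item the cross terms $(1-r)\E[\Dr{\cdot}{\cdot}]E$, $(1-r)^2DE$, $(1-r)^2DB$ and $(1-r)^2D\,\E[\cdot]E$;
\item the $(1-r)^3$ quadruple product.
\end{itemize}
The constant $1+(1-r)B$ together with the frozen tail expectation forms $\rho_k$. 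Integrating over $\bv{x}_{k-1}$ against $p_{k-1}$ is then separable, so we may minimize pointwise in $\bv{x}_{k-1}$.

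\textbf{Step 3 (Lemma application) and the main obstacle.} After dividing by $\rho_k$, the problem is $\min_{p^{(u)}_k}\rho_k^{-1}\E_{p^{(u)}_k}[J_k]+\Dr{p^{(u)}_k}{q^{(u)}_k}$. Here $J_k$ depends on $\bv{u}_k$ only through the stage-$k$ and tail quantities, so it is a fixed function of $\bv{u}_k$. Lemma~\ref{lem:densitysolution0} therefore gives the deformed-exponential solution with $\bar\rho_k=\lambda_0(r-1)\rho_k$, which closes the induction. The same computation with $D\equiv 0$ reproduces Theorem~\ref{th:solnonefpN}, which is a useful consistency check.

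We expect two obstacles.
\begin{itemize}
\item \textbf{Hypotheses of the lemma.} Lemma~\ref{lem:densitysolution0} requires $\rho_k>0$ and $J_k$ convex, lower semicontinuous and finite. Positivity of $\rho_k$ holds automatically for $r<1$. For $r>1$ we would impose, or justify, that the frozen divergences are small enough that $1+(1-r)B>0$, since each Tsallis divergence is bounded below by $-1/(r-1)$-type quantities. Convexity of $J_k$ in $\bv{u}_k$ is not automatic, and we would state it as a standing assumption inherited from the lemma.
\item \textbf{Bookkeeping of the freezing choice.} The stated formula must arise from one consistent choice among the two approximations discussed before Theorem~\ref{th:solnonefpN}. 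We would fix that choice at the outset, namely freezing the policy factor in each bilinear product, and verify each monomial against it.
\end{itemize}
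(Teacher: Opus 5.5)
Your route matches the paper's. You split $\Dr{p_{0:N}}{q_{0:N}}$ pseudo-additively into past, stage $k$ and tail, freeze, and apply Lemma~\ref{lem:densitysolution0} pointwise in $\bv{x}_{k-1}$. Your order of splitting differs, but $a\oplus b=a+b+(1-r)ab$ is associative, so you get the same fourteen monomials as in \eqref{eq:generalexpandk}. The gap is in Step 2, which you present as routine matching.

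Write $R=\E_{p_{k-1}}[\Dr{p^{(u)}_k}{q^{(u)}_k}]$, $E'=\E_{p_{k-1}p^{(u)}_k}[E]$ and $T=\Dr{p^{l+}_{(k+1):N}}{q_{(k+1):N}}$. Write $R^l$, $D^l$ for $R$, $D$ evaluated at $\optimalpolicygen{k}{k-1}{l}$. Your three identities combine to $B+(1+(1-r)B)\{D+(1+(1-r)D)(R+E'+(1-r)RE')\}$. Read literally, your freezing rule keeps free the one linear copy paired with the cost and freezes the policy factor in each bilinear product. That turns $(1-r)DR$ into the cost term $(1-r)R^l\,\E_{p^{(x)}_k}[T]$ and leaves $\rho_k=1+(1-r)B$ with no tail contribution, which contradicts your own description of $\rho_k$. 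Getting the tail into $\rho_k$ requires the paper's mixed rule: freeze $R$ in the $RE'$ products, freeze $D$ in the $DR$ and $DE'$ products, and freeze both $D$ and $R$ in the trilinear ones.

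Even with that rule, the monomials do not reproduce the displayed formula. The substitutions give $\rho_k=(1+(1-r)B)(1+(1-r)D^l)$ and $J_k=(1+(1-r)B)\bigl[\E_{p^{(x)}_k}[T]+(1+(1-r)R^l)(1+(1-r)D^l)E\bigr]$. The factor $1+(1-r)B$ cancels in $J_k/\rho_k$, as it must, since the objective is $B+(1+(1-r)B)A$. By contrast, \eqref{eq:backfpsolktermsthm} differs in three ways:
\begin{enumerate}
\item It puts the tail into $\rho_k$ with coefficient $1$. No freezing choice can produce this, because every product of $D$ with $R$ carries a factor $(1-r)$.
\item It has $(1-r)^2D^lE$ where the expansion forces $(1-r)D^lE$.
\item It lacks $(1-r)B\,\E_{p^{(x)}_k}[T]$ and $(1-r)^2BR^lE$.
\end{enumerate}

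Your proposed consistency check exposes this. With $D\equiv0$ the displayed $J_k$ collapses to $E+(1-r)BE+(1-r)R^lE$. The base-case proof instead uses $J=J^0_N+\rho J^1_N$, which also contains $(1-r)^2BR^lE$. So you cannot assert $J_k,\rho_k$ ``exactly as in'' the statement.

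What your argument establishes is the structural claim: a deformed-exponential closed form whose $J_k$ and $\rho_k$ are read off the expansion under an explicitly fixed freezing rule. The paper's proof establishes no more than this either; it ends at `of the form given in Lemma~\ref{lem:densitysolution0}'. Your caveats are legitimate additions that the paper does not make: positivity of $\rho_k$ for $r>1$, where the danger is that the nonnegative divergences $B$, $D^l$ are large, and convexity of $J_k$.
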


\begin{remark}\label{rem:rto1}
    Observe that in the expressions for both Theorem~\ref{th:solnonefpN} and~\ref{th:solnonefp}, as $r \to 1$, many of the terms, in particular those scaled by $(1-r)$, disappear. Without these terms, it can be seen that the expressions become equivalent to the original FPD with KL divergence, that is, as defined in Theorem~\ref{thm:klfpdsoln}.
\end{remark}

\subsection{Theoretical Properties}\label{sec:convergence}
\begin{remark} In an earlier version, the manuscript claimed theoretical results demonstrating that the map $T_{\alpha}$ incorporating the nested backwards induction $S_{l,k}$ defined above is a fixed point iteration and is, moreover, convergent to a solution of Problem~\ref{prob:mainb}. The claims were later found to be mistaken, and the existence of a solution to Problem~\ref{prob:mainb}, the fixed point guarantees of $T_{\alpha}$ and the convergence guarantees of the scheme are the topics of ongoing work.\end{remark}

\begin{lemma}\label{lem:tsdprop}
    The Tsallis Divergence $\Dr{p(\bv{u})}{q(\bv{u})}$ satisfies:
    \begin{enumerate}
        \item The quantity exhibits quadratic growth with respect to variations in the first argument, i.e., there exists a $q>0$ and $C_q\in\mathbb{R}$ such that for all $h(\bv{u})$ such that $p(\bv{u})+\epsilon h(\bv{u})\sD $ for sufficiently small $\epsilon>0$,
        \[
        \delta^2 \Dr{p(\bv{u})}{q(\bv{u})}[h(\bv{u}),h(\bv{u})]\ge c_q\left\|h(\bv{u})\right\|^2_{L^1(\R^d)} +C_q
        \]
        \item For all $p>0$, it holds that
\[
\|p(\bv{u})-q(\bv{u})\|^2_{L^1(\Re^d)}\le C(p) \Dr{p(\bv{u})}{q(\bv{u})} 
\]
This statement is called the \emph{Generalized Pinsker Inequality}. Observe that this implies that $\Dr{p(\bv{u})}{q(\bv{u})}$ is lower semicontinuous with respect to $p(\bv{u})$ in the strong norm topology of $L^1(\Re^d)$. 
    \end{enumerate}
\end{lemma}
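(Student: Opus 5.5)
Both parts will be proved by writing the divergence as an integral functional of the density ratio and using convexity of the generating function. By the definition of $\log_r$,
\[
\Dr{p(\bv{u})}{q(\bv{u})}=\frac{1}{r-1}\left(\int p(\bv{u})^{r}q(\bv{u})^{1-r}\,d\bv{u}-1\right)=\int q(\bv{u})\,f\!\left(\frac{p(\bv{u})}{q(\bv{u})}\right)d\bv{u},
\]
with $f(t)=(t^{r}-t)/(r-1)$. Since $f''(t)=r\,t^{r-2}>0$ for every $r>0$, $f$ is strictly convex with $f(1)=0$. Hence $\Dr{\cdot}{q}$ is an $f$-divergence, and both items reduce to properties of $f$.

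\emph{Item 1.} Compute the second variation along an admissible direction $h$ with $\int h=0$:
\[
\delta^2\Dr{p}{q}[h,h]=r\int p(\bv{u})^{r-2}q(\bv{u})^{1-r}h(\bv{u})^2\,d\bv{u}.
\]
Write $w:=r\,p^{r-2}q^{1-r}$. By Cauchy--Schwarz,
\[
\Big(\int|h|\Big)^2\le\int w\,h^2\cdot\int w^{-1}.
\]
This gives the lower bound with $c_q=\big(\int w^{-1}\big)^{-1}$ and $C_q=0$, provided $\int w^{-1}=r^{-1}\int p^{2-r}q^{r-1}<\infty$. When this integral is infinite, the statement's additive constant $C_q$ has to absorb the defect. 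One way is to truncate to the set where $p/q$ lies in a compact interval, which yields the bound on that set, and then to control the complement by the mass of $h$ there. The exact truncation argument still needs to be worked out; see the last paragraph.

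\emph{Item 2.} I would reduce to the classical Pinsker inequality through convexity of $f$. I see two possible routes:
\begin{enumerate}
\item For $r\ge 1$, Jensen's inequality gives $\int p\,(p/q)^{r-1}\ge\exp\big((r-1)\DKL{p}{q}\big)\ge 1+(r-1)\DKL{p}{q}$. Hence $\Dr{p}{q}\ge\DKL{p}{q}\ge\tfrac12\|p-q\|_{L^1}^2$.
\item For $0<r<1$, I would instead use the standard comparison of $f$-divergences with total variation (Gilardoni's bound). For $f$ with $f''(1)=r$ it gives $\Dr{p}{q}\ge\tfrac{r}{2}\|p-q\|_{L^1}^2$, so $C(p)=2/r$ works, or, more crudely, a constant depending on $p$ through a moment of $p/q$.
\end{enumerate}
The lower semicontinuity claim follows separately. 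The map $p\mapsto\int q\,f(p/q)$ is a convex integral functional with nonnegative convex integrand, modulo the affine correction $t\mapsto t-1$, which integrates to zero. Fatou's lemma along an a.e.-convergent subsequence of any $L^1$-convergent sequence then gives lower semicontinuity directly.

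\emph{Main obstacle.} The hard step is item 1 without integrability assumptions. The weight $p^{r-2}q^{1-r}$ may fail to have an integrable reciprocal, for example for heavy-tailed $p$ when $r<2$. A uniform $c_q$ may then not exist, which is presumably why the statement carries the additive constant $C_q$. I would first settle the case in which $p/q$ is bounded above and away from zero on the support, where the Cauchy--Schwarz bound applies cleanly. Only afterwards would I attempt the truncation for the general case, and I would accept extra hypotheses on $p$ and $q$ if that argument does not close.
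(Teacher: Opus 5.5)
Your $f$-divergence representation and your second variation $\delta^2 D_r[h,h]=r\int h^2p^{r-2}q^{1-r}$ match the paper's computation. Your Cauchy--Schwarz step is an actual argument, whereas the paper only asserts that quadratic growth ``can be observed''. It also settles item~1 more often than you expect. By H\"older, $\int p^{2-r}q^{r-1}\le(\int p)^{2-r}(\int q)^{r-1}=1$ for $1\le r\le 2$, so there item~1 holds unconditionally with $c_q\ge r$ and $C_q=0$. The integrability can only fail for $r<1$ (e.g.\ $p$ heavier-tailed than $q$) or for $r>2$ (e.g.\ $p$ lighter-tailed than $q$), not for ``heavy-tailed $p$ when $r<2$''. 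You can also sharpen your condition. Admissibility forces $\int h=0$ and $\{h<0\}\subset\{p>0\}$, so $\|h\|_{L^1}^2=4(\int h^-)^2\le 4\,\delta^2 D_r[h,h]\int_{\{p>0\}}w^{-1}$. This matters for $r>2$ when $p$ vanishes on part of $\{q>0\}$: there $w=0$, so your full-space integral is infinite even though the bound may still hold.

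The genuine gap is your plan for the remaining case, because the additive constant $C_q$ cannot absorb anything. Admissible directions form a cone (if $h$ is admissible, so is $th$ for $t>0$), and $\delta^2 D_r[th,th]=t^2\,\delta^2 D_r[h,h]$. Taking $h=0$ forces $C_q\le 0$. Dividing by $t^2$ and letting $t\to\infty$ then shows that the inequality with any $C_q$ is equivalent to the one with $C_q=0$. So no truncation argument can succeed, and in fact item~1 is false without an extra hypothesis.

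Take $r=3$, $p=\mathcal{N}(0,1/4)$, $q=\mathcal{N}(0,1)$, so that $D_3(p\|q)<\infty$. The directions $h_a=\mathbf{1}_{[a,a+1]}-\mathbf{1}_{[-a-1,-a]}$ are admissible with $\|h_a\|_{L^1}=2$. Since $pq^{-2}=2\sqrt{2\pi}\,e^{-x^2}$, we get $\delta^2 D_3[h_a,h_a]\le 12\sqrt{2\pi}\,e^{-a^2}\to 0$. The same $h_a$ with $r=1/2$, $p$ standard Cauchy and $q=\mathcal{N}(0,1)$ gives $\delta^2 D_{1/2}[h_a,h_a]=\int_a^{a+1}p^{-3/2}q^{1/2}\to 0$, while $D_{1/2}(p\|q)\le 2$.

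So the right outcome is the one you held in reserve: assume $\int_{\{p>0\}}p^{2-r}q^{r-1}<\infty$, which is automatic for $1\le r\le 2$. The paper's proof does not address this either.

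Item~2 is fine. For $r\ge 1$, your Jensen-plus-Pinsker argument is a self-contained alternative to the paper's bare citation. For $0<r<1$, Gilardoni's bound does give $D_r\ge\frac{r}{2}\|p-q\|_{L^1}^2$. You are also right that lower semicontinuity does not follow from the Pinsker inequality and needs your separate Fatou argument.
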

\begin{proof}
    For the first statement, rewrite the Tsallis Divergence as:
\[
\Dr{p(\bv{u})}{q(\bv{u})}\equiv \frac{1}{r-1}\left(\int p(\bv{u})^r q(\bv{u})^{1-r} d\bv{u}-1\right)
\]
Now take the first, then second variation of $\Dr{p(\bv{u})}{q(\bv{u})}$ by computing
\[
\frac{d^2\Dr{p(\bv{u})+\epsilon h(\bv{u}) }{q(\bv{u})}}{d\epsilon^2}\Bigg\vert_{\epsilon=0}
\]
with perturbation $h(\bv{u})\in\sD$. We obtain:
\[
\delta \Dr{p(\bv{u})}{q(\bv{u})}[h(\bv{u})] = \frac{r}{r-1}\int h(\bv{u}) p(\bv{u})^{r-1} q(\bv{u})^{1-r} d\bv{u}
\]
and
\[
\delta^2 \Dr{p(\bv{u})}{q(\bv{u})}[h(\bv{u}),h(\bv{u})] = r \int h(\bv{u})^2 p(\bv{u})^{r-2} q(\bv{u})^{1-r} d\bv{u}
\]
We can observe quadratic growth with respect to any $h(\bv{u})$ for all $p(\bv{v})$ and $q(\bv{u})$. 

The second statement is established in the literature for different values of $r$ and levels of bound tightness across~\cite{gilardoni2010pinsker,reid2009generalised,beretta2026generalized}.
\end{proof}

Observe that a fixed point of $T_{\alpha}$, i.e., a set of densities satisfying,
    \[
    \left\{\optimalpolicygen{k}{k-1}{*}\right\} = T_{\alpha}\left(\left\{\optimalpolicygen{k}{k-1}{*}\right\}\right)
    \]
    solves Problem~\ref{prob:mainb}.

\section{Derivation of the Solution Maps}\label{sec:proofs}

Begin with the base case, that is $k=N$.
\begin{proof} \emph{of Theorem}~\ref{th:solnonefpN}.
Consider, to start with, writing the objective function appearing in~\eqref{eqn:main_problemb} and applying sub-additivity~\eqref{eq:non-additive}:
\begin{equation}\label{eq:sumndecompose}
\begin{array}{l}
\Dr{p_{0:N}}{q_{0:N}} = \E_{p_{N-1}}\left[\Dr{\shortplant{N}\shortpolicy{N}{N-1}}{q_N}\right]
\\ \quad +\Dr{p_{0:(N-1)}}{q_{0:(N-1)}}+(1-r)\E_{p_{N-1}}\left[\Dr{\shortplant{N}\shortpolicy{N}{N-1}}{q_N}\right]\Dr{p_{0:(N-1)}}{q_{0:(N-1)}} 
\end{array}
\end{equation}
Now, collect the components of the sum that only include the last time period in the FPD problem, and for the extra term arising from subadditivity, replace the generic density terms $\{\policy{k}{k-1}\}_{k\in[N-1]}$ with the estimates from the previous fixed point iteration $l$, that is, 
the set $\left\{\optimalpolicygen{k}{k-1}{l}\right\}_{k\in[N-1]}$ to obtain:
\[
\begin{array}{l}
\E_{p_{N-1}}\left[\Dr{\shortplant{N}\shortpolicy{N}{N-1}}{q_N}\right]
\\ \quad +(1-r)\E_{p_{N-1}}\left[\Dr{\shortplant{N}\shortpolicy{N}{N-1}}{q_N}\right]\Dr{\bar{p}^{l}_{0:(N-1)}}{q_{0:(N-1)}} 
\end{array}
\]

Subadditivity as in Lemma~\ref{lem:non-additivity} applied to the Tsallis divergence term yields:
\[
\begin{array}{l}
\Dr{\shortplant{N}\shortpolicy{N}{N-1}}{q_N} = \E_{\shortpolicy{N}{N-1}}\left[\Dr{\shortplant{N}}{q^{(x)}_N}\right]+ \Dr{\shortpolicy{N}{N-1}}{q^{(u)}_N} \\
\qquad + (1-r) \E_{\shortpolicy{N}{N-1}}\left[\Dr{\shortplant{N}}{q^{(x)}_N}\right]\Dr{\shortpolicy{N}{N-1}}{q^{(u)}_N}
\end{array}
\]
The last term is now bilinear with respect to $\shortpolicy{N}{N-1}$, preventing direct application of Lemma~\ref{lem:densitysolution0}. 
It is at this point that we constructively define $S_{l,N}$ to be both tractable and convergent, by selectively fixing one of these $\shortpolicy{N}{N-1}$ terms to $\optimalpolicygen{N}{N-1}{l}$. We choose to replace the regularization, $\Dr{\shortpolicy{N}{N-1}}{q^{(u)}_N}$ to instead appear as $\Dr{\optimalpolicygen{N}{N-1}{l}}{q^{(u)}_N}$. The final form of the complete expression:
\[
\begin{array}{l}
\E_{p_{N-1}}\left[\E_{\shortpolicy{N}{N-1}}\left[\Dr{\shortplant{N}}{q^{(x)}_N}\right]+ \Dr{\shortpolicy{N}{N-1}}{q^{(u)}_N} \right]
\\ \quad +(1-r)\E_{p_{N-1}}\left[ \E_{\shortpolicy{N}{N-1}}\left[\Dr{\shortplant{N}}{q^{(x)}_N}\right]\Dr{\optimalpolicygen{N}{N-1}{l}}{q^{(u)}_N}\right]
\\ \quad +(1-r)\E_{p_{N-1}}\left[\E_{\shortpolicy{N}{N-1}}\left[\Dr{\shortplant{N}}{q^{(x)}_N}\right]\right]\Dr{\bar{p}^{l}_{0:(N-1)}}{q_{0:(N-1)}} 
\\ \quad +(1-r)\E_{p_{N-1}}\left[\Dr{\shortpolicy{N}{N-1}}{q^{(u)}_N}\right]\Dr{\bar{p}^{l}_{0:(N-1)}}{q_{0:(N-1)}} 
\\ \quad +(1-r)^2\E_{p_{N-1}}\left[ \E_{\shortpolicy{N}{N-1}}\left[\Dr{\shortplant{N}}{q^{(x)}_N}\right]\Dr{\optimalpolicygen{N}{N-1}{l}}{q^{(u)}_N}\right]\Dr{\bar{p}^{l}_{0:(N-1)}}{q_{0:(N-1)}} 
\end{array}
\]
Observe that, as claimed, this expression is of the form~\eqref{eq:gentsopt0} defined in Lemma~\ref{lem:densitysolution0}

Now, condition on $x_{N-1}$ and obtain the conditional minimum of this expression with respect to $\shortpolicy{N}{N-1}$. We observe that $\mathbb{E}_{p_{N-1}}$ is applied to the entire expression, and thus we can obtain a minimum by obtaining a minimum with respect to $\shortpolicy{N}{N-1}$ for every possible value of $x_{N-1}$. To this expression, we are ready to apply Lemma~\ref{lem:densitysolution0} with 
\[
\begin{array}{l}
q(U) = q^{(u)}_N \\
J =  
\left((1-r)\Dr{\optimalpolicygen{N}{N-1}{l}}{q^{(u)}_N}\right)\Dr{\shortplant{N}}{q^{(x)}_N} 
\\ \qquad +(1-r)^2 \Dr{\optimalpolicygen{N}{N-1}{l}}{q^{(u)}_N}\Dr{\bar{p}^{l}_{0:(N-1)}}{q_{0:(N-1)}}\Dr{\shortplant{N}}{q^{(x)}_N} 
\\ \qquad\qquad\qquad \qquad + \left(1+(1-r)\Dr{\bar{p}^{l}_{0:(N-1)}}{q_{0:(N-1)}}\right)\Dr{\shortplant{N}}{q^{(x)}_N}
 \\ \qquad := J_N^0 \\ \qquad\qquad\qquad \qquad + \rho J_N^1 \\ 
\rho = \left(1+ (1-r)\Dr{\bar{p}^{l}_{0:(N-1)}}{q_{0:(N-1)}}\right)
\end{array}
\]
To obtain the first backwards induction expression at time $N$ given the previous solution $l$:
    \begin{equation}\label{eq:backfpsolN}
    \begin{array}{l}
        \hat{p}^{*,l+1}_N(\bv{u}_N\vert\bv{x}_{N-1}):= \frac{\exp_r\left(-\bar{\rho}_N^{-1}J_N^0+J^1_N \right)q(\bv{u}_N\vert \bv{x}_{N-1})}{\int \exp_r\left(-\bar{\rho}_N^{-1}J_N^0+J_N^1 \right)q(\bv{u}_N\vert \bv{x}_{N-1}) d\bv{u}_N},\\
        \text{with } \bar{\rho}_N = \lambda_0(r-1)\rho
        \end{array}
    \end{equation}

\end{proof}

We can now prove Theorem~\ref{th:solnonefp}.

\begin{proof}\emph{ of Theorem}~\ref{th:solnonefp}.

Collect the components of the full sum, as in~\eqref{eq:sumndecompose}, and cleave and pivot the expression with
respect to stage $k$:
\[
\begin{array}{l}
\Dr{p_{0:N}}{q_{0:N}} = 
\Dr{p_{0:k}}{q_{0:k}} 
\\ \qquad +\E_{p_k}\left[\Dr{p_{(k+1):N}}{q_{(k+1):N}}\right] + (1-r) 
\Dr{p_{0:k}}{q_{0:k}} \E_{p_k}\left[\Dr{p_{(k+1):N}}{q_{(k+1):N}} \right] \\ 
= \Dr{p_{0:(k-1)}}{q_{0:(k-1)}} + \E_{p_{k-1}}\left[\Dr{\shortplant{k}\shortpolicy{k}{k-1}}{q_k}\right] \\ 
\qquad + (1-r) \Dr{p_{0:(k-1)}}{q_{0:(k-1)}}\E_{p_{k-1}}\left[\Dr{\shortplant{k}\shortpolicy{k}{k-1}}{q_k} \right]+ \E_{p_k}\left[\Dr{p_{(k+1):N}}{q_{(k+1):N}}\right] \\ 
\qquad + (1-r)  \E_{p_k}\left[\Dr{p_{(k+1):N}}{q_{(k+1):N}}\right]
\left[ \Dr{p_{0:(k-1)}}{q_{0:(k-1)}} + \E_{p_{k-1}}\left[\Dr{\shortplant{k}\shortpolicy{k}{k-1}}{q_k}\right]\right.
\\ \qquad \left.+ (1-r) \Dr{p_{0:(k-1)}}{q_{0:(k-1)}}\E_{p_{k-1}}\left[\Dr{\shortplant{k}\shortpolicy{k}{k-1}}{q_k}\right]\right] 
\end{array}
\]

Noting that the intention is to obtain an optimization problem with respect to $\optimalpolicy{k}{k-1}$, discard the terms
in the expression that are independent of this quantity and rearrange the expression:
\[
\begin{array}{l}
\E_{p_{k-1}}\left[\Dr{\shortplant{k}\shortpolicy{k}{k-1}}{q_k}\right]
+ (1-r) \Dr{p_{0:(k-1)}}{q_{0:(k-1)}}\E_{p_{k-1}}\left[\Dr{\shortplant{k}\shortpolicy{k}{k-1}}{q_k}\right] \\ \qquad + \E_{p_{k-1}\shortplant{k}\shortpolicy{k}{k-1}}\left[\Dr{p_{(k+1):N}}{q_{(k+1):N}}\right] \\ 
\qquad + (1-r)  \E_{p_{k-1}\shortplant{k}\shortpolicy{k}{k-1}}\left[\Dr{p_{(k+1):N}}{q_{(k+1):N}}\right]  \Dr{p_{0:(k-1)}}{q_{0:(k-1)}}\\ 
\qquad + (1-r)  \E_{p_{k-1}\shortplant{k}\shortpolicy{k}{k-1}}\left[\Dr{p_{(k+1):N}}{q_{(k+1):N}}\right]
\E_{p_{k-1}}\left[\Dr{\shortplant{k}\shortpolicy{k}{k-1}}{q_k}\right] \\ \qquad + (1-r)^2  \E_{p_{k-1}\shortplant{k}\shortpolicy{k}{k-1}}\left[\Dr{p_{(k+1):N}}{q_{(k+1):N}}\right] \Dr{p_{0:(k-1)}}{q_{0:(k-1)}}\E_{p_{k-1}}\left[\Dr{\shortplant{k}\shortpolicy{k}{k-1}}{q_k}\right] 
\end{array}
\]
Apply the chain rule again to split terms involving $\Dr{\shortplant{k}\shortpolicy{k}{k-1}}{q_k}$ to obtain:

\[
\begin{array}{l}
\E_{p_{k-1}}\left[\Dr{\shortpolicy{k}{k-1}}{q^{(u)}_k}\right] + \E_{p_{k-1}\shortpolicy{k}{k-1}}\left[\Dr{\shortplant{k}}{q^{(x)}_k}\right] \\ 
\qquad 
 + (1-r) \E_{p_{k-1}}\left[\Dr{\shortpolicy{k}{k-1}}{q^{(u)}_k}\right]\E_{p_{k-1}\shortpolicy{k}{k-1}}\left[\Dr{\shortplant{k}}{q^{(x)}_k}\right]
\\ 
\qquad
+ (1-r) \Dr{p_{0:(k-1)}}{q_{0:(k-1)}}\E_{p_{k-1}}\left[\Dr{\shortpolicy{k}{k-1}}{q^{(u)}_k}\right]
\\ 
\qquad
+ (1-r) \Dr{p_{0:(k-1)}}{q_{0:(k-1)}}\E_{p_{k-1}\shortpolicy{k}{k-1}}\left[\Dr{\shortplant{k}}{q^{(x)}_k}\right]
\\ 
\qquad
+ (1-r)^2 \Dr{p_{0:(k-1)}}{q_{0:(k-1)}}\left[\Dr{\shortpolicy{k}{k-1}}{q^{(u)}_k}\right]\E_{p_{k-1}\shortpolicy{k}{k-1}}\left[\Dr{\shortplant{k}}{q^{(x)}_k}\right]
\\ \qquad + \E_{p_{k-1}\shortplant{k}\shortpolicy{k}{k-1}}\left[\Dr{p_{(k+1):N}}{q_{(k+1):N}}\right] \\ 
\qquad + (1-r)  \E_{p_{k-1}\shortplant{k}\shortpolicy{k}{k-1}}\left[\Dr{p_{(k+1):N}}{q_{(k+1):N}}\right]  \Dr{p_{0:(k-1)}}{q_{0:(k-1)}}\\ 
\qquad + (1-r)  \E_{p_{k-1}\shortplant{k}\shortpolicy{k}{k-1}}\left[\Dr{p_{(k+1):N}}{q_{(k+1):N}}\right]
\E_{p_{k-1}}\left[\Dr{\shortplant{k}\shortpolicy{k}{k-1}}{q_k}\right] \\ \qquad + (1-r)^2  \E_{p_{k-1}\shortplant{k}\shortpolicy{k}{k-1}}\left[\Dr{p_{(k+1):N}}{q_{(k+1):N}}\right]  \Dr{p_{0:(k-1)}}{q_{0:(k-1)}}\E_{p_{k-1}}\left[\Dr{\shortplant{k}\shortpolicy{k}{k-1}}{q_k}\right]  
\end{array}
\]
Applying sub-additivity to the terms $\E_{p_{k-1}}\left[\Dr{\shortplant{k}\shortpolicy{k}{k-1}}{q_k}\right]$, we obtain the complete expression:

\begin{equation}\label{eq:generalexpandk}
\begin{array}{l}
\E_{p_{k-1}}\left[\Dr{\shortpolicy{k}{k-1}}{q^{(u)}_k}\right] + \E_{p_{k-1}\shortpolicy{k}{k-1}}\left[\Dr{\shortplant{k}}{q^{(x)}_k}\right] \\ 
\qquad 
 + (1-r) \E_{p_{k-1}}\left[\Dr{\shortpolicy{k}{k-1}}{q^{(u)}_k}\right]\E_{p_{k-1}\shortpolicy{k}{k-1}}\left[\Dr{\shortplant{k}}{q^{(x)}_k}\right]
\\ 
\qquad
+ (1-r) \Dr{p_{0:(k-1)}}{q_{0:(k-1)}}\E_{p_{k-1}}\left[\Dr{\shortpolicy{k}{k-1}}{q^{(u)}_k}\right]
\\ 
\qquad
+ (1-r) \Dr{p_{0:(k-1)}}{q_{0:(k-1)}}\E_{p_{k-1}\shortpolicy{k}{k-1}}\left[\Dr{\shortplant{k}}{q^{(x)}_k}\right]
\\ 
\qquad
+ (1-r)^2 \Dr{p_{0:(k-1)}}{q_{0:(k-1)}}\left[\Dr{\shortpolicy{k}{k-1}}{q^{(u)}_k}\right]\E_{p_{k-1}\shortpolicy{k}{k-1}}\left[\Dr{\shortplant{k}}{q^{(x)}_k}\right]
\\ \qquad + \E_{p_{k-1}\shortplant{k}\shortpolicy{k}{k-1}}\left[\Dr{p_{(k+1):N}}{q_{(k+1):N}}\right] \\ 
\qquad + (1-r)  \E_{p_{k-1}\shortplant{k}\shortpolicy{k}{k-1}}\left[\Dr{p_{(k+1):N}}{q_{(k+1):N}}\right]  \Dr{p_{0:(k-1)}}{q_{0:(k-1)}}\\ 
\qquad + (1-r)  \E_{p_{k-1}\shortplant{k}\shortpolicy{k}{k-1}}\left[\Dr{p_{(k+1):N}}{q_{(k+1):N}}\right]
\E_{p_{k-1}}\left[\Dr{\shortpolicy{k}{k-1}}{q^{(u)}_k}\right]\\ 
\qquad + (1-r)  \E_{p_{k-1}\shortplant{k}\shortpolicy{k}{k-1}}\left[\Dr{p_{(k+1):N}}{q_{(k+1):N}}\right]
\E_{p_{k-1}\shortpolicy{k}{k-1}}\left[\Dr{\shortplant{k}}{q^{(x)}_k}\right] \\ 
\qquad + (1-r)^2  \E_{p_{k-1}\shortplant{k}\shortpolicy{k}{k-1}}\left[\Dr{p_{(k+1):N}}{q_{(k+1):N}}\right] \\ 
\qquad\qquad\qquad \times
\E_{p_{k-1}}\left[\Dr{\shortpolicy{k}{k-1}}{q^{(u)}_k}\right]\E_{p_{k-1}\shortpolicy{k}{k-1}}\left[\Dr{\shortplant{k}}{q^{(x)}_k}\right]
\\ \qquad + (1-r)^2  \E_{p_{k-1}\shortplant{k}\shortpolicy{k}{k-1}}\left[\Dr{p_{(k+1):N}}{q_{(k+1):N}}\right] \\ 
\qquad\qquad\qquad \times \Dr{p_{0:(k-1)}}{q_{0:(k-1)}}\E_{p_{k-1}}\left[\Dr{\shortpolicy{k}{k-1}}{q^{(u)}_k}\right]
\\ \qquad + (1-r)^2  \E_{p_{k-1}\shortplant{k}\shortpolicy{k}{k-1}}\left[\Dr{p_{(k+1):N}}{q_{(k+1):N}}\right] \\ 
\qquad\qquad\qquad \times \Dr{p_{0:(k-1)}}{q_{0:(k-1)}}\E_{p_{k-1}\shortpolicy{k}{k-1}}\left[\Dr{\shortplant{k}}{q^{(x)}_k}\right] 
\\ \qquad + (1-r)^3  \E_{p_{k-1}\shortplant{k}\shortpolicy{k}{k-1}}\left[\Dr{p_{(k+1):N}}{q_{(k+1):N}}\right] \\ 
\qquad\qquad\qquad \times \Dr{p_{0:(k-1)}}{q_{0:(k-1)}}\E_{p_{k-1}}\left[\Dr{\shortpolicy{k}{k-1}}{q^{(u)}_k}\right]\E_{p_{k-1}\shortpolicy{k}{k-1}}\left[\Dr{\shortplant{k}}{q^{(x)}_k}\right]
\end{array}
\end{equation}
Finally, we must perform appropriate substitutions, first with respect to the variable terms $\optimalpolicy{j}{j-1}$ for $j<k$ from $\{\optimalpolicygen{j}{j-1}{l}\}$
and for $j>k$ from $\{\optimalpolicygen{j}{j-1}{l+}\}$. The following substitutions will be chosen for the bilinear terms with respect to $\optimalpolicy{k}{k-1}$:
\[
\begin{array}{l}
\E_{p_{k-1}}\left[\Dr{\shortpolicy{k}{k-1}}{q^{(u)}_k}\right]\E_{p_{k-1}\shortpolicy{k}{k-1}}\left[\Dr{\shortplant{k}}{q^{(x)}_k}\right] \\ \qquad \Rightarrow
\E_{p^l_{k-1}}\left[\Dr{\optimalpolicygen{k}{k-1}{l}}{q^{(u)}_k}\right]\E_{p_{k-1}\shortpolicy{k}{k-1}}\left[\Dr{\shortplant{k}}{q^{(x)}_k}\right]
\\ 
\E_{p_{k-1}\shortplant{k}\shortpolicy{k}{k-1}}\left[\Dr{p^{l+}_{(k+1):N}}{q_{(k+1):N}}\right]
\E_{p_{k-1}}\left[\Dr{\shortpolicy{k}{k-1}}{q^{(u)}_k}\right] \\ \qquad 
\Rightarrow 
\E_{p_{k-1}\shortplant{k}\optimalpolicygen{k}{k-1}{l}}\left[\Dr{p^{l+}_{(k+1):N}}{q_{(k+1):N}}\right]
\E_{p_{k-1}}\left[\Dr{\shortpolicy{k}{k-1}}{q^{(u)}_k}\right]
\\ 
 \E_{p_{k-1}\shortplant{k}\shortpolicy{k}{k-1}}\left[\Dr{p^{l+}_{(k+1):N}}{q_{(k+1):N}}\right]
\E_{p_{k-1}\shortpolicy{k}{k-1}}\left[\Dr{\shortplant{k}}{q^{(x)}_k}\right] \\ \qquad  \Rightarrow
\E_{p_{k-1}\shortplant{k}\optimalpolicygen{k}{k-1}{l}}\left[\Dr{p^{l+}_{(k+1):N}}{q_{(k+1):N}}\right]
\E_{p_{k-1}\shortpolicy{k}{k-1}}\left[\Dr{\shortplant{k}}{q^{(x)}_k}\right]
 \\ 
 \E_{p_{k-1}\shortplant{k}\shortpolicy{k}{k-1}}\left[\Dr{p_{(k+1):N}}{q_{(k+1):N}}\right] \Dr{p_{0:(k-1)}}{q_{0:(k-1)}}\E_{p_{k-1}}\left[\Dr{\shortpolicy{k}{k-1}}{q^{(u)}_k}\right] \\ \qquad \Rightarrow
  \E_{p^l_{k-1}\shortplant{k}\optimalpolicygen{k}{k-1}{l}}\left[\Dr{p_{(k+1):N}}{q_{(k+1):N}}\right] \Dr{p_{0:(k-1)}}{q_{0:(k-1)}}\E_{p_{k-1}}\left[\Dr{\shortpolicy{k}{k-1}}{q^{(u)}_k}\right] 
\\ \E_{p_{k-1}\shortplant{k}\shortpolicy{k}{k-1}}\left[\Dr{p_{(k+1):N}}{q_{(k+1):N}}\right] \Dr{p_{0:(k-1)}}{q_{0:(k-1)}}\E_{p_{k-1}\shortpolicy{k}{k-1}}\left[\Dr{\shortplant{k}}{q^{(x)}_k}\right] \\ \qquad \Rightarrow
  \E_{p^l_{k-1}\shortplant{k}\optimalpolicygen{k}{k-1}{l}}\left[\Dr{p_{(k+1):N}}{q_{(k+1):N}}\right] \Dr{p_{0:(k-1)}}{q_{0:(k-1)}}\E_{p_{k-1}}\left[\Dr{\shortpolicy{k}{k-1}}{q^{(u)}_k}\right] 
\end{array}
\]
Perform the following substitutions for the trilinear terms:
\[
\begin{array}{l}
\E_{p_{k-1}\shortplant{k}\shortpolicy{k}{k-1}}\left[\Dr{p_{(k+1):N}}{q_{(k+1):N}}\right] \\ 
\qquad\qquad\qquad \times
\E_{p_{k-1}}\left[\Dr{\shortpolicy{k}{k-1}}{q^{(u)}_k}\right]\E_{p_{k-1}\shortpolicy{k}{k-1}}\left[\Dr{\shortplant{k}}{q^{(x)}_k}\right]
\\
\qquad \Rightarrow 
\E_{p^l_{k-1}\shortplant{k}\optimalpolicygen{k}{k-1}{l}}\left[\Dr{p^{l+}_{(k+1):N}}{q_{(k+1):N}}\right] \\ 
\qquad\qquad\qquad \times
\E_{p^l_{k-1}}\left[\Dr{\optimalpolicygen{k}{k-1}{l}}{q^{(u)}_k}\right]\E_{p_{k-1}\shortpolicy{k}{k-1}}\left[\Dr{\shortplant{k}}{q^{(x)}_k}\right]
\\ 
 \E_{p_{k-1}\shortplant{k}\shortpolicy{k}{k-1}}\left[\Dr{p_{(k+1):N}}{q_{(k+1):N}}\right] \\ 
\qquad\qquad\qquad \times \Dr{p_{0:(k-1)}}{q_{0:(k-1)}}\E_{p_{k-1}}\left[\Dr{\shortpolicy{k}{k-1}}{q^{(u)}_k}\right]\E_{p_{k-1}\shortpolicy{k}{k-1}}\left[\Dr{\shortplant{k}}{q^{(x)}_k}\right] 
\\
\qquad \Rightarrow 
 \E_{p^l_{k-1}\shortplant{k}\optimalpolicygen{k}{k-1}{l}}\left[\Dr{p^{l+}_{(k+1):N}}{q_{(k+1):N}}\right] \\ 
\qquad\qquad\qquad \times \Dr{p^l_{0:(k-1)}}{q^l_{0:(k-1)}}\E_{p^l_{k-1}}\left[\Dr{\optimalpolicygen{k}{k-1}{l}}{q^{(u)}_k}\right]\E_{p^l_{k-1}\shortpolicy{k}{k-1}}\left[\Dr{\shortplant{k}}{q^{(x)}_k}\right] 
\end{array}
\]
With the two aforementioned substitutions, the expression~\eqref{eq:generalexpandk} is of the form given in Lemma~\ref{lem:densitysolution0}, from
which the conclusion of the Theorem can be drawn.

\end{proof}




\bibliographystyle{cas-model2-names}

\bibliography{refs.bib}

\end{document}